\newcommand{\beweis}{\ifthenelse{\boolean{skript}} {
\begin{proof}[Beweis]
\mbox{}\vfill \mbox{}
\end{proof}
\pagebreak }{} }
\providecommand{\Fourier}{\mathcal{F}}
\renewcommand{\hat}{\widehat}
\renewcommand{\tilde}{\widetilde}
\providecommand{\Infourier}{\mathcal{F}^{-1}  }
\newcommand{\ebig}[1]{\mathbb{E}\big[  #1  \big]  }
\newcommand{\Ebig}[1]{\mathbb{E}\Big[  #1  \Big]  }
\newcommand{\ebigg}[1]{\mathbb{E}\bigg[  #1  \bigg]  }
\providecommand{\intu}{\int_0^u }
\providecommand{\DMax}{\Delta_{\text{\tiny{Max} } }}
\providecommand{\inth}{\int_{-1/h}^{1/h}  }
\providecommand{\lintu}{\intu\limits}
\providecommand{\epx}{\hat{p}(x)  }
\providecommand{\Levy}{L\'evy}
\providecommand{\Dmax}{\Delta_{\tiny{max}} }
\providecommand{\m}{\text{-}}
\providecommand{\Rem}{\textrm{R}}
\providecommand{\ind}[1]{\mathds{1}_{#1}}
\begin{document}
\title{\Large
Nonparametric estimation for irregularly sampled L\'evy processes
}
\author{Johanna Kappus \\[0.2cm]
Institut f\"ur Mathematik \\
Universit\"at Rostock 
}
\maketitle

\setlength{\parindent}{0pt}
\begin{abstract}
\noindent  We consider  nonparametric statistical inference for L\'evy processes sampled irregularly, at   low frequency.  The estimation of the jump dynamics as well as the estimation of the distributional density are investigated.

\noindent Non-asymptotic risk bounds are derived and the corresponding rates of convergence are discussed  under global as well as local regularity assumptions. Moreover, minimax optimality is proved for the estimator of the jump measure. Some numerical examples are given to illustrate the practical performance of the estimation procedure.
\end{abstract}
\textit{Keywords.} Nonparametric statistical inference. L\'evy processes. Irregular sampling. Density estimation. 

\textit{AMS Subject Classification.} 62G05   62G07   62G20 62M05  
\section{Introduction} 
Nonparametric statistical inference for stochastic processes with jumps  has  a long history, dating back as far as to the work by  \cite{Rubin_Tucker}  or \citet{Basawa_Brockwell}. 

In the past decade, jump processes have become increasingly popular among practitioners, especially  in the field of financial applications,  and  the interest  in the topic  has constantly grown. In particular,  a vast amount of literature has been published on  the estimation of the characteristics of L\'evy processes. 

In estimation problems for L\'evy processes, one can essentially distinguish between two different types of observation schemes:  In a high frequency framework, it is assumed that the maximal distance between the observation times tends to zero as the number $n$ of observations increases to infinity, whereas in a low frequency regime,  it is not assumed that the observation distances are asymptotically small.

So far, when low frequent observations of the underlying L\'evy process are considered, most publications have focused on the case where the observations are homogeneous, which means that the distance $\Delta$ between any two observation times $t_j$ and $t_{j+1}$ is fixed an does not vary with $j$, see for example, \citet{Neumann_Reiss}, \cite{Comte_Genon_1} or \cite{Kappus_SPA}.  Contrarily to this, when a high frequent sampling scheme is being investigated, the homogeneity assumption can easily be disposed of, see, among many others, the work by \cite{Figueroa_2} or \cite{Comte_Genon_2,Comte_Genon_3}. 

In the present work, we focus on  nonparametric statistical inference for L\'evy processes  when  the sampling scheme is low frequent and irregular.  This means that the (deterministic)  distances $\Delta_j$  between any two observation times $t_j$ and $t_{j+1}$  may vary in $j$, from very small values close to zero to some possibly very large value ~$\Dmax$.

 \cite{Belomestny_tcLevy} and, most recently,  \citet{Belomestny_Trabs} have investigated the estimation of the characteristics of a L\'evy process $(X_t)$, when homogeneous and low frequent observation of a time changed process $Y_t = X_ {\mathcal{T}(t) }$ are available. In this framework $\mathcal{T}$ is understood to be a random time change, independent of $(X_t)$. 

It is important to point out that the deterministic and irregular sampling scheme discussed in the present  work is not embedded, as as special case, in the time changed model which has been investigated in 
\cite{Belomestny_tcLevy}.  In that paper, the estimator is constructed for a stationary time-change process and under the standing assumption, referred to as (ATI), that $\E[\mathcal{T}(t) ] = t$.  However, when a deterministic time change is considered, this would readily imply   that $ \mathcal{T}(t) = t$  and the problem would hence degenerate to a standard estimation problem with homogeneous observations. 

The statistical framework considered in the present publication, with arbitrary irregular sampling,  is hence new in the literature on nonparametric estimation for L\'evy processes. We focus on the following problems: Firstly, the nonparametric estimation of the jump measure is being discussed under some additional assumptions on the process. Secondly, we discuss the estimation of the distributional density of $X_1$ under very general a priori assumptions on the process. This second problem has not yet received much attention in the literature on L\'evy processes and is, indeed, quite standard when high frequent or heterogeneous observations are available. However,  under low frequent and irregular sampling, the estimation of distributional densities is not straightforward. 

This paper is organized as follow: In Section \ref{Abschnitt_Modell}, the statistical framework is made precise and some notation is introduced. In Section \ref{Abschnitt_Sprungschaetzung},  the estimation of the jump measure is investigated. An oracle type estimator for the L\'evy density is introduced, which turns out to depend on the appropriate choice of certain weight functions.  Non-asymptotic bounds are derived for the oracle estimator and corresponding rates of convergence are derived and shown to be optimal in a minimax sense.  The density estimation is then addressed in Section  \ref{Abschnitt_Dichteschaetzung}.  In Section \ref{Abschnitt_Numerik},  an algorithm for the fully data driven choice of the weights as well as for the choice of the cutoff parameter is introduced. All proofs are postponed to Section \ref{Abschnitt_Beweise}. 
\section{Statistical model, assumptions and notation}\label{Abschnitt_Modell}
A  one dimensional L\'evy process $X=\{X_t:t\geq 0\}$ is observed at deterministic time points $0=t_0 < \cdots < t_n=:T$. Throughout the rest of this work, we assume that there exists a positive constant $ \Delta_{\max}$ such that $  \forall  j\in \N, \  t_j- t_{j-1}\leq \Delta_{\max}$ .  Apart from this upper bound, no additional assumptions on the observation times are imposed so the sampling is irregular and fully general. 

The goal of this paper is twofold. Firstly, we focus on the estimation of the jump dynamics under the following additional assumptions. 
\begin{itemize}
\item[(A1)]$X$ has finite variation on compact sets.
\item[(A2)]$X$ has no drift component.
\item[(A3)]For one and hence for any $t>0$, $\E[X_t^2]<\infty$. 
\item[(A4)] The   \Levy\ measure $\nu$ has a Lebesgue density 
$\eta$ which is continuous on $\R \setminus\{0\}$. 
\end{itemize}

Under  (A1)-(A4), the characteristic function of $X_{\Delta}$ is known to admit  the following representation. 
\begin{equation}
\label{Darstellung_cf}
\phi_{\Delta}(u):=\E\left[e^{iuX_{\Delta}} \right]=e^{\Delta \Psi(u)},
\end{equation}
with characteristic exponent 
\begin{equation}
\label{Darstellung_ce}
\Psi(u)=\int \left( e^{iux} - 1 \right)  \eta(x)\d x= \int \frac{e^{iux} - 1    }{ x } x \eta(x) \d x.
\end{equation}
For the proof, see e.g. Theorem 8.1 in \cite{Sato}.  The process is then entirely described by the jump measure and hence by the function $g(x):=x\eta(x)$.  We discuss the estimation of $g$, with $\lk^2(\Omega)-$loss on some interval $\Omega:=[\omega_1,\omega_2]$, $-\infty\leq \omega_1< \omega_2\leq \infty$.  

The above assumptions are met  for many prototypical L\'evy processes  such as  compound Poisson processes,  gamma processes or  tempered stable processes without drift and with  index of stability $\alpha \in (0,1)$.  Notice that $g\in \lk^2(\R)$ may fail to hold true,  but is  satisfied if $\eta(x) =O(|x|^{-\alpha}), \ |x|\to 0$ for some $\alpha<\frac{3}{2}$. This is met, for example, for compound Poisson processes, gamma-processes and tempered stable processes with $\alpha\in (0,1/2)$.  If 
$g\not\in \lk^2(\R)$, estimating $g$ with $\lk^2(\Omega)$-loss does still make sense for  compact sets $\Omega$   bounded away from the origin.

It is worth mentioning that the assumptions (A1) and (A2) can be omitted  and a fully general treatment of the problem is possible, but at the cost  of additional technical complications, see, for example,  \cite{Neumann_Reiss}
for the homogeneous case. 
%%%%%%%%%%%%%%%%%%%%%%%%%%%%%%%%%%%%% 

Secondly, we investigate the estimation of the distributional density $f$ of $X_1$ with $\lk^2(\R)$-loss, in case that  a square integrable Lebesgue density exists.  It is well known that $X_1$ possesses a density $f\in \lk^2(\R)$   if the process has a high enough activity of small jumps or a non-zero Gaussian component. This follows, for example, from arguments given in \cite{Orey}.  For example, the Lebesgue density exists and is square integrable  if  the process is of pure jump type and  there exist constants $C>0$ and $\beta >\frac{1}{2}$ such that \mbox{ $\eta(x)+\eta (-x) > \beta |x|^{-1}\  \forall x\in[0,C]$}.

Typical examples are  gamma processes with scale parameter $\beta >\frac{1}{2}$,  stable or tempered stable processes as well as the Brownian motion. Prototypic counterexamples are compound Poisson processes or gamma processes with $\beta \leq \frac{1}{2}$.  

%%%%%%%%%%%%%%%%%%%%%%%%%%%%%%%%%%%%%

We conclude this section by introducing some notation which will be used throughout the rest of the text.

For a function $f\in \lk^1(\R)\cup\lk^2(\R)$,  $\Fourier f$ is understood to be the Fourier transform.  By $\Delta_j$, we denote the distance $t_{j}- t_{j-1}$ between the observation times.  $Z_j= X_{j}- X_{j-1}$ is the corresponding increment of the process and 
\[
\phi_j(u):= \ebig{ e^{iu Z_j}  }   = \ebig{ e^{iu X_{{\Delta_j}} }  }
\]
its characteristic function. Given a kernel $\kf$ and bandwidth $h>0$, we write $\kf_h(x) := 1/h \kf(x/h)$.  
Moreover, $\DMax:=\max\{\Dmax, 1\}$.  Let $X_+$ and $X_{-}$ be independent L\'evy processes with L\'evy measures $\nu_+(\d x)= \nu|_{(0,\infty)}(\d x)$  and $\nu_{-}(\d x):= \nu|_{(-\infty,0)}(\d (\m x))$. For $m\in \N$, 
\mbox{ $C_m^{\pm}:= \E[X_+^m] +\E[X_{-}^{m}]$}.  Finally, given a continuous function $f$ and $u\in \R$ 
\[
\| f \|_{\lk^k, u}^k := \int_{-u}^u  |f(z)|^k \d z   \quad \text{and} \quad \| f\|_{\infty, u}:= \sup_{z\in [-u,u] }|f(z)|. 
\]
\section{Estimation of the jump dynamics} \label{Abschnitt_Sprungschaetzung}
\subsection{Estimation procedure and non-asymptotic risk bounds}
\label{Abschnitt_Schaetzer}

It follows from formula \eqref{Darstellung_cf} and formula \eqref{Darstellung_ce}  that  the Fourier transform of $g$ can be recovered by differentiating the characteristic exponent,
\begin{equation}
\label{char_exp_abl}
\Psi'(u) =
\int \frac{\d}{\d u} e^{iux } \nu(\d x) = i \int e^{iux}  x\eta(x) \d x \d x= i \Fourier g(u).
\end{equation}
For (possibly complex) weight functions  $w_{1},\cdots, w_{n}$ to be chosen appropriately , we define 
\begin{equation}
\label{p_Definition}
p(u) := \sum_{j=1}^n\limits  w_j (u) \phi_{\Delta_j} '(u)
=  \sum_{j=1}^n\limits  w_j (u) \Delta_j\Psi'(u)  \phi_{{\Delta_j}}(u)
\quad \text{
and }
\quad 
q(u) := \sum_{j=1}^n\limits w_j(u) \Delta_j \phi_{\Delta_j}(u). 
\end{equation}
The derivative of the characteristic exponent then equals the ratio $p/q$ and formula  \eqref{char_exp_abl}  permits to recover the Fourier transform of $g$ as follows,  
\begin{align}
 \Fourier g(u)= \Psi'(u) /i =\frac{p(u)}{i q(u) }. 
\end{align}

Let us introduce the empirically accessible counterparts of $p$ and $q$, 
\begin{align}
\hat{p}(u):=\sum_{j=1}^{n}    w_{j}(u)  \hat{\phi}'_j(u) :=  \sum_{j=1}^{n} w_{j}(u) i Z_j e^{iu Z_j} 
\end{align}
and
\begin{align}
 \hat{q}(u) := \sum_{j=1}^{n} \Delta_j w_{j}(u) \hat{\phi}_j(u):= \sum_{j=1}^{n} \Delta_j w_{j}(u) 
e^{iu Z_j} .
\end{align}
Taking expectation, we find that these quantities are unbiased estimators of $p$ and $q$. This suggests to use  $\hat{p}/\hat{q}$ as an estimator of $\Psi'$.

However, the estimation of the L\'evy density from low frequent observations is a prototypical statistical inverse problem and the rates of convergence are  governed by the smoothness of the underlying density as well as the decay behavior of the function $q$ in the denominator.  Too small values of the denominator will typically lead  to a highly irregular behavior of the estimator and hence a large variance.  Inspired by  \cite{Neumann}, we introduce a regularized version of the inverse of $\hat{q}$. Once $\hat{q}$ is below some threshold which is of the order of the standard deviation, a reasonable estimate of $1/q$ is no longer possible so the estimator is set to zero. Moreover, in order to ensure that the estimator is bounded from below, we introduce an additional constant threshold value.  For some threshold parameter $\kappa>0$ to be chosen, 
\begin{align}
\frac{1}{\tilde{q}(u)}:=\frac{1\left(\{|\hat{q}(u)|\geq \max\{ \sigma(u),\kappa\} \}\right)}{\hat{q}(u)}, \quad \text{with}  \quad \sigma(u):= \sqrt{  \sum_{j=1}^{n}   \Delta_j^2 |w_j(u)|^2\,  }. 
\end{align}
The corresponding regularized estimator of $\Psi'$ is    $\hat{\Psi'}(u):=\hat{p}/\tilde{q}$.  Finally,  let   $\kf$ be a kernel  function having an integrable Fourier transform.   For $h>0$,  the corresponding kernel estimator of $g$ is 
\begin{equation}
\hat{g}_{h}(x):= \Infourier \left( \Fourier \kf_h  \frac{\hat{p} }{ i \tilde{q} } \right)(x)
= \frac{1}{2 \pi } \int e^{\m iux }  \Fourier \kf(hu) \frac{\hat{p}(u) }{i \tilde{q}(u) }  \d u .
\end{equation}

%%%%%%%%%%%%%%%%%%%%%%%
There remains to specify the $w_j$. It  is intuitive that the optimal choice  of $w_j$  will depend on  $\Delta_j$ as well as on $\phi_j$.  For small values of $\phi_j$, the noise dominates so the quality of the estimator gets worse, which should lead to choosing a relatively small weight. Moreover, the smaller  $\Delta_j$ is, the more information on the jump dynamics  is contained in the observation, which motivates to give a high weight to $\hat{\phi}_j$. These considerations lead to specifying the ideal (oracle) weights 
$w_{j}^*(u):=\overline{ \phi_{{\Delta_j}} (u) }$.   Notice that this statistical framework has 
 strong  structural similarities  to  a deconvolution problem with heteroscedastic errors, see \cite{Delaigle_Meister}.  

In what follows, $\hat{g}_h$  is understood to be the oracle estimator corresponding to the ideal weights 
$w_j= w_j^{*},\, j=1, \cdots, n$.   It is clear, however, that the $w_j^*$ are not feasible to actually compute,  since they depend on the (unknown) characteristic function.  In the sequel, we derive risk bounds and optimality properties for the oracle estimator $\hat{g}_h$.  A procedure for the fully data driven choice of the weights is then  proposed in Section \ref{Abschnitt_Numerik}. 
%%%%%%%%%%%%%%%%%%%%%%%%%

\newcommand{\gest}{\hat{g}_{h,n}}
\renewcommand{\E}[1]{\mathbb{E}\left[#1\right] }
\begin{theorem}
\label{Hauptsatz_Risiko} Assume that $\E{ |X_1|^4 }<\infty$.  
Assume, moreover, that $g|_\Omega  \in \lk^2(\Omega)$.  Then there exists some $C: \R_+^2\to \R_+$  which is monotonously increasing with respect to both components such that 
\begin{align}
\E{\|\hat{g}_{h,n} - g     \|^2_{{\lk^2}(\Omega)}  }\leq & 2\| g- \kf_{h}\ast g\|_{{\lk^2(\Omega) }}^2 +C(  \DMax, C^{\pm}_{4} )  \int \frac{ |\Fourier \kf(h u)|^2 }{q(u) }\   \textrm{d}  u\\
= &2\| g- \kf_{h}\ast g\|_{{\lk^2(\Omega) }}^2 +C(  \DMax, C^{\pm}_{4} )  \int \frac{ |\Fourier \kf(h u)|^2}{\sum_{j=1}^{n} \Delta_j |\phi_{{\Delta_j}}(u)|^2  }\   \textrm{d}  u .
\end{align}
\end{theorem}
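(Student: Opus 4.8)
The plan is to perform a bias/stochastic split, express the stochastic part as an integral in $u$ via Plancherel's identity, and reduce the whole statement to a single pointwise bound on the stochastic error of the ratio $\hat p/\tilde q$; the crux will be to control the event that the data-driven denominator $\hat q$ is atypically small. Concretely: with the oracle weights $w_j=w_j^{\ast}=\overline{\phi_{\Delta_j}}$ one has $q(u)=\sum_{j=1}^n\Delta_j|\phi_{\Delta_j}(u)|^2\in(0,\infty)$, $p(u)=\Psi'(u)\,q(u)$ and $\Fourier g(u)=p(u)/(iq(u))$. Setting $\hat T:=\hat p/(i\tilde q)$ and $T:=p/(iq)=\Fourier g$, we have $\hat g_{h,n}=\Infourier(\Fourier\kf(h\,\cdot)\,\hat T)$ and $\kf_h\ast g=\Infourier(\Fourier\kf(h\,\cdot)\,T)$. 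From $\|a-g\|_{\lk^2(\Omega)}^2\le2\|a-\kf_h\ast g\|_{\lk^2(\Omega)}^2+2\|\kf_h\ast g-g\|_{\lk^2(\Omega)}^2$, then $\|\cdot\|_{\lk^2(\Omega)}\le\|\cdot\|_{\lk^2(\R)}$, Plancherel and Tonelli, the first summand is at most $\tfrac1\pi\int|\Fourier\kf(hu)|^2\,\mathbb{E}[|\hat T(u)-T(u)|^2]\,\d u$, so it suffices to prove
\[
\mathbb{E}\bigl[\,|\hat T(u)-T(u)|^2\,\bigr]\ \le\ \frac{C(\DMax,C_4^{\pm})}{q(u)}\qquad\text{for every }u\in\R ,
\]
with $C$ monotone in both arguments. (Note $g\in\lk^1(\R)$ under (A1) and (A3), which makes $\Fourier g$, $\kf_h\ast g$ and the above manipulations legitimate.)

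\emph{Moment inputs.} Since the cumulants of $X_{\Delta_j}$ equal $\Delta_j$ times those of $X_1$ and $\Delta_j\le\DMax$, one gets $\mathbb{E}[X_{\Delta_j}^2]\le c_2\Delta_j$ and $\mathbb{E}[X_{\Delta_j}^4]\le c_4\Delta_j$ with $c_2,c_4$ monotone in $(\DMax,C_4^{\pm})$; likewise $|\Psi'(u)|=|\Fourier g(u)|\le\|g\|_{\lk^1}=C_1^{\pm}$, dominated by a monotone function of $C_4^{\pm}$ (Jensen applied to the subordinators $X_{\pm}$). Independence of the $Z_j$ together with $|\phi_{\Delta_j}(u)|\le1$ gives that $\hat p,\hat q$ are unbiased and
\[
\Var\hat p(u)\le c_2\,q(u),\qquad \Var\hat q(u)\le\sigma(u)^2\le\DMax\,q(u),\qquad \mathbb{E}\bigl[|\hat p(u)-p(u)|^4\bigr]\le C\,q(u)\bigl(1+q(u)\bigr),
\]
the last by a Rosenthal/Marcinkiewicz--Zygmund inequality. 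The decisive input is that $\hat q(u)-q(u)$ is a sum of independent, centred, complex summands bounded by $2\DMax$ with total variance at most $\DMax\,q(u)$, so Bernstein's inequality yields $\PP(|\hat q(u)-q(u)|>t)\le2\exp(-c\,t^2/(\DMax(q(u)+t)))$ for all $t>0$.

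\emph{The pointwise bound.} Fix $u$ and abbreviate $q=q(u)$ etc. Let $A:=\{|\hat q|\ge\max\{\sigma,\kappa\}\}$, so $\hat T=\hat p/(i\hat q)$ with $|\hat q|\ge\kappa$ on $A$ and $\hat T=0$ on $A^c$. If $q\le C_0$ for a sufficiently large constant $C_0$ (depending on $\DMax$ and the fixed $\kappa$), then $1/q$ is bounded below and the crude estimate $\mathbb{E}[|\hat T-T|^2]\le2\kappa^{-2}(\Var\hat p+|p|^2)+2|\Psi'|^2\le C$ already suffices. If $q>C_0$, put $G:=\{|\hat q|\ge q/2\}$ and decompose
\[
\hat T-T=\ind{A\cap G}\Bigl(\frac{\hat p-p}{i\hat q}+\frac{p}{i}\,\frac{q-\hat q}{\hat q\,q}\Bigr)+\ind{A\cap G^c}(\hat T-T)-\ind{A^c}\,\frac{p}{iq}.
\]
On $A\cap G$ one uses $|\hat q|\ge q/2$ together with $\Var\hat p\le c_2q$, $\Var\hat q\le\DMax q$ and $|p|/q=|\Psi'|\le C_1^{\pm}$ to bound the second moment by $O(1/q)$. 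On $A\cap G^c$ and on $A^c$ — both contained in $\{|\hat q-q|>q/2\}$ once $q>C_0$ — one combines the a.s.\ bounds $|\hat T-T|\le|\hat p|/\kappa+C_1^{\pm}$ and $|p|/q\le C_1^{\pm}$ with Cauchy--Schwarz, $\mathbb{E}|\hat p|^4\le C(q^4+q)$ and the Bernstein estimate, using that $q^r e^{-cq/\DMax}$ is bounded on $(0,\infty)$ for every fixed $r$; this again gives $O(1/q)$. Adding the three contributions yields $\mathbb{E}[|\hat T(u)-T(u)|^2]\le C(\DMax,C_4^{\pm})/q(u)$, and the reduction step concludes.

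\emph{Main obstacle.} The genuinely delicate point is the rare event $\{|\hat q(u)|\text{ small}\}$ when $q(u)$ is large: there $\hat p/\hat q$ can be of order $q(u)$, and the fourth moments available under $\mathbb{E}|X_1|^4<\infty$ would only provide a polynomial tail for $|\hat q-q|$, far too weak to beat this growth. The remedy is that the summands of $\hat q-q$ are bounded, so Bernstein's inequality gives a tail exponentially small in $q(u)$, overwhelming any polynomial factor coming from the conditional second moment. The remaining ingredients — the variance identities, the Rosenthal bound, and the bookkeeping showing that $c_2,c_4,C_1^{\pm}$ are controlled by monotone functions of $\DMax$ and $C_4^{\pm}$ (linearity of cumulants in time, $\Delta_j\le\DMax$, Jensen for $X_{\pm}$) — are routine.
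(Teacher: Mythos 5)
Your proposal is correct and shares with the paper the bias/variance split, the Plancherel reduction to a pointwise bound $\mathbb{E}\bigl[\,|\hat p(u)/\tilde q(u) - p(u)/q(u)|^2\,\bigr] \le C(\DMax,C_4^{\pm})/q(u)$, and the oracle-weight identities $q=\sum_j\Delta_j|\phi_{\Delta_j}|^2$, $p=\Psi'q$. Where you diverge is in how the pointwise bound is established. The paper splits deterministically, $|\hat p/\tilde q - p/q|^2 \le 2\bigl(|\hat p - p|^2/|\tilde q|^2 + |p|^2 |1/q - 1/\tilde q|^2\bigr)$, and controls the two pieces through two auxiliary lemmas: a Rosenthal bound on $\mathbb{E}|\hat p - p|^{2m}$ (Lemma~\ref{Lemma_p}) and a Neumann-type bound $\mathbb{E}|1/q - 1/\tilde q|^m \le C\min\{|q|^{-m},|q|^{-3m/2}\}$ (Lemma~\ref{Lemma_Neumann}); the latter handles the rare event of $|\hat q|$ being small via Markov's inequality applied to arbitrary $m$-th moments of $\hat q - q$, using $\sigma(u)^2 \le \Dmax\,q(u)$ so that each additional moment order earns a factor $q^{-1/2}$. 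You instead partition on $A\cap G$, $A\cap G^c$, $A^c$, bound the good event with plain variance estimates, and kill the bad events with Bernstein's inequality. Both routes hinge on the same structural fact you correctly identified: the summands of $\hat q - q$ are \emph{deterministically} bounded (by $2\Dmax$), so $\hat q$ concentrates around $q$ with variance proxy $\sigma^2 \le \Dmax q$, which overwhelms the polynomial growth of the crude bound on $|\hat p/\hat q - \Psi'|$ on the bad set. Your Bernstein route is arguably cleaner; the paper's is more elementary (Markov and Rosenthal only). One small inaccuracy in your commentary: you claim that, since only $\mathbb{E}|X_1|^4 < \infty$ is assumed, one would get merely a polynomial tail for $|\hat q - q|$, far too weak for this purpose. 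That misreads the situation: the moments of $\hat q - q$ are not governed by moments of $X$ but by the deterministic boundedness of its summands, so Rosenthal gives moments of all orders for free, and a polynomial tail of arbitrarily high order is available --- which is precisely what the paper exploits. Your Bernstein argument is nevertheless a valid and elegant alternative.
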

\textbf{Discussion.}  It is interesting to note that the upper risk bound  in the preceding theorem confirms the analogy of the nonparametric estimation of $g$ with a deconvolution problem with heteroscedastic errors, see \cite{Delaigle_Meister}. 

We concentrate, in this work, on a deterministic sampling scheme. However, the construction of the estimator and the upper bounds may be generalized to the case where the process is sampled at random times, provided that the random sampling does not depend on the underlying process~$X$. In the proof of the rate results, one will then have to consider conditional expectations with respect to $\mathcal{T}$. 
\subsection{Rates of convergence}
\subsubsection{Minimax upper  bounds} 
In this section, we study the asymptotic properties of $\hat{g}_h$ which can be derived from the upper risk bound formulated in Theorem \ref{Hauptsatz_Risiko}. 
\subsubsection*{Global regularity}
We start by considering the estimation of $g$ with $\lk^2$-loss on the whole real axis and the resulting rates of convergence over certain nonparametric classes. It has been pointed out that the estimation of $g$ resembles the estimation of a distributional density from observations with additional heteroscedastic errors.  The rates of convergence will thus depend on the $\Delta_j$ and on the decay of the  $|\phi_j|=|\phi|^{\Delta_j} $, as well as on the smoothness of $g$. 

However, one needs to beware of the fact that (unlike in a standard deconvolution framework) the smoothness of $g$ and the decay of $\phi$ are not independent of each other. 

It is easily seen that the polynomial or exponential decay conditions  
\[
 |\phi(u)|\sim |u|^{-\beta}, \   u\to \infty   \quad \text{or}   \quad |\phi(u)|\sim \exp ( -c|u|^{\alpha}) 
\]
imply a logarithmic or polynomial growth of the characteristic exponent which gives, in turn, 
\[
\limsup_{u\to \infty} \frac{|\Fourier g(u)|}{|u|^{-1} }  =\limsup_{u\to \infty} \frac{|\Psi'(u) |}{|u|^{-1} }
>0 \quad \text{or}   \quad \limsup_{u\to \infty} \frac{|\Fourier g(u)|}{|u|^{\alpha-1} }>0. 
\]
The faster the characteristic function $\phi$ of $X_1$ decays, the slower will hence be the decay of the Fourier transform of $g$.  For processes of compound Poisson type, the absolute value of  $\phi$ is bounded from below and $g$ may be infinitely differentiable. \\[0.2cm]
Let us introduce the following nonparametric classes of functions and of corresponding L\'evy processes: For $\beta >0$ and $\alpha\in (0,1/2)$, let $\mathcal{G}_{\tiny{\text{pol}} }(\beta,C_\phi,c_\phi, C_g,C)  $ or  $\mathcal{G}_{\tiny{\text{exp}} }(\alpha,C_\phi,C_g,C)$ be the classes of functions $g(x)= x\eta (x)$ such that for the corresponding L\'evy process, (A1)-(A4) are met,  $C_4^{\pm}<C$,
\[
\forall u\in \R: \  |\phi_{{X_1}}(u)| \geq  (1+C_\phi|u|^{2})^{-\frac{\beta}{2} }
\quad \text{or}   \quad \forall u\in \R: \  |\phi_{{X_1}}(u)| \geq  C_\phi \exp(-c_\phi |u|^{\alpha} ),
\] 
and, in addition, 
\[
\forall u\in \R: |\Fourier g(u)| \leq C_g |u|^{-1}\quad \text{or}   \quad \forall u\in \R: \
|\Fourier g(u)|  \leq C_g  |u|^{\alpha -1}.
\]
Moreover, let $\mathcal{G}_{\tiny \text{cp} }(C_\phi, a, \rho, C_g,c_g,C)$ be the class of functions $g$  such that  $X$ is a compound Poisson process, $C_4^{\pm}<C$,
\begin{align}
\forall u\in \R: \  |\phi_{{X_1}}(u)| \geq  C_\phi
\end{align}
and 
\begin{align}
\forall u \in \R: |\Fourier g(u)|  \leq C_g |u|^{-a} \exp(-c_g |u|^{\rho}  ) . 
\end{align}
The following result which describes the rates of convergence with respect to the prescribed smoothness classes introduced above is a direct consequence of Theorem \ref{Hauptsatz_Risiko}. 
\begin{proposition}  Let $\kf$ be the sinc-kernel. This is equivalent to stating that  $\Fourier \kf(u)= \mathds{1}_{[-1,1]}(u)$.  
\begin{itemize}
\item[(i)] Let $h^*$ be implicitly defined, as the solution of the minimization equation 
\begin{equation}  \label{Bandweite_glo_pol}
 \sum_{j=1}^{n}  \Delta_j  h^{2\Delta_j \beta+2} = 1.
\end{equation}
Then 
\begin{align}
\sup_{ g\in \mathcal{G}_{\tiny{\text{pol}} }(\beta,C_\phi,c_\phi, C_g,C)    } 
  \mathbb{E}_g\big[\| g-\hat{g}_{{h^*}}  \|_{\lk^2(\R)}^2 \big] = O(h^*  ). 
\end{align}
\item[(ii)] Let $h^*$ be the solution of 
\[
 \sum_{j=1}^{n}  \Delta_j  e^{-2\Delta_j c_\phi(1/h)^{\alpha}}  h^{2(\alpha -1)}= 1.
\]
Then 
\begin{align}
\sup_{ g\in \mathcal{G}_{\tiny{\text{exp}} }(\alpha,C_\phi,C_g,C)   } 
  \mathbb{E}_g\big[\| g-\hat{g}_{{h^*}}  \|_{\lk^2(\R)}^2 \big] = O\big( (h^*)^{1-2\alpha}  \big). 
\end{align}
\item[(iii)] Let $h^*$ be the solution of 
\[
e^{2 c_g (1/h)^\rho } h^{-2a} = \sum_{j=1}^{n} \Delta_j C_\phi^{\Delta_j}. 
\]
\begin{itemize}
\item[a)] Assume that $\rho>0$ holds and   $a=(1-\rho)/2$.  Then
\begin{align}
\sup_{ g\in\mathcal{G}_{\tiny \text{cp} }(C_\phi, a, \rho, C_g,c_g,C) } 
  \mathbb{E}_g\big[\| g-\hat{g}_{{h^*}}  \|_{\lk^2(\R)}^2 \big] = O\big(\exp(- 2c_g (1/h^*)^{\rho} ) \big).
\end{align}
\item[b)] Assume that $\rho=0$ and $a>0$. Then 
\begin{align}
\sup_{ g\in\mathcal{G}_{\tiny \text{cp} }(C_\phi, a, \rho, C_g,c_g,C) } 
  \mathbb{E}_g\big[\| g-\hat{g}_{{h^*}}  \|_{\lk^2(\R)}^2 \big] = O\big( (h^*)^{2a-1}  \big). 
\end{align}
\end{itemize}
\end{itemize}
\end{proposition}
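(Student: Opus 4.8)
Since $C_{4}^{\pm}<\infty$ forces $\mathbb{E}|X_{1}|^{4}<\infty$, Theorem~\ref{Hauptsatz_Risiko} applies throughout, and the plan is to read off the stated rates from it (taken with $\Omega=\R$) by specialising to the sinc kernel and estimating the bias and variance contributions separately, with $h^{*}$ chosen exactly so that the two contributions balance. For the sinc kernel $\Fourier\kf_{h}(u)=\Fourier\kf(hu)=\mathds 1_{[-1/h,1/h]}(u)$, so Plancherel's theorem turns the bias term into
\[
\|g-\kf_{h}\ast g\|_{\lk^{2}(\R)}^{2}=\frac{1}{2\pi}\int_{|u|>1/h}|\Fourier g(u)|^{2}\,\d u ,
\]
which is finite in each class: (A1) and (A3) force $g\in\lk^{1}(\R)$, so $\Fourier g=\Psi'/i$ is bounded and continuous near $0$, and the prescribed decay of $\Fourier g$ at infinity then puts $\Fourier g$, hence $g$, in $\lk^{2}(\R)$. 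The variance term equals $C(\DMax,C_{4}^{\pm})\int_{|u|\le1/h}(\sum_{j=1}^{n}\Delta_{j}|\phi_{\Delta_{j}}(u)|^{2})^{-1}\,\d u$, and $\phi_{\Delta_{j}}(u)=e^{\Delta_{j}\Psi(u)}$ from \eqref{Darstellung_cf} gives $|\phi_{\Delta_{j}}(u)|^{2}=|\phi_{X_{1}}(u)|^{2\Delta_{j}}$, so the class lower bounds on $|\phi_{X_{1}}|$ enter directly; since $C(\DMax,C_{4}^{\pm})\le C(\DMax,C)$ by monotonicity, every estimate below is uniform over the class.

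For the bias I would substitute the class upper bounds on $|\Fourier g|$ and evaluate the tail integrals. In the polynomial and exponential classes $\int_{|u|>1/h}|u|^{-2}\,\d u=2h$ and $\int_{|u|>1/h}|u|^{2\alpha-2}\,\d u$ (convergent because $\alpha<1/2$) give bias $=O(h)$ and $=O(h^{1-2\alpha})$. In the compound Poisson class one needs a Laplace-type estimate: $\int_{|u|>1/h}|u|^{-2a}e^{-2c_{g}|u|^{\rho}}\,\d u$ is, for $\rho>0$, of order $h^{2a-1+\rho}e^{-2c_{g}(1/h)^{\rho}}$, and the hypothesis $a=(1-\rho)/2$ cancels the polynomial factor, leaving bias $=O(e^{-2c_{g}(1/h)^{\rho}})$; for $\rho=0$ the integral is just $\int_{|u|>1/h}|u|^{-2a}\,\d u$, of order $h^{2a-1}$ (the assertion being non-trivial when $a>1/2$).

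For the variance the key is that each class lower bound $(1+C_{\phi}u^{2})^{-\beta/2}$, $C_{\phi}e^{-c_{\phi}|u|^{\alpha}}$, $C_{\phi}$ for $|\phi_{X_{1}}(u)|$ is non-increasing in $|u|$; bounding each summand below by this power makes $u\mapsto\sum_{j}\Delta_{j}|\phi_{X_{1}}(u)|^{2\Delta_{j}}$ non-increasing too, so on $\{|u|\le1/h\}$ the integrand is at most its value at $|u|=1/h$ and $\int_{|u|\le1/h}(\cdots)^{-1}\,\d u\le(2/h)(\sum_{j}\Delta_{j}|\phi_{X_{1}}(1/h)|^{2\Delta_{j}})^{-1}$. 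Using $h\le1$ and $0<\Delta_{j}\le\DMax$ to absorb the harmless factors — $(1+C_{\phi}h^{-2})^{-\beta\Delta_{j}}$ is, uniformly in $j$, comparable to $h^{2\beta\Delta_{j}}$, and $C_{\phi}^{2\Delta_{j}}$ to $C_{\phi}^{\Delta_{j}}$ — the variance is, up to constants, at most $h^{-1}(\sum_{j}\Delta_{j}h^{2\beta\Delta_{j}})^{-1}$, $h^{-1}(\sum_{j}\Delta_{j}e^{-2c_{\phi}\Delta_{j}(1/h)^{\alpha}})^{-1}$, and $h^{-1}(\sum_{j}\Delta_{j}C_{\phi}^{\Delta_{j}})^{-1}$, respectively. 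Each displayed equation defining $h^{*}$ is precisely the balancing identity: in (i), $\sum_{j}\Delta_{j}h^{2\beta\Delta_{j}+2}=1$ turns $h^{-1}(\sum_{j}\Delta_{j}h^{2\beta\Delta_{j}})^{-1}$ into $h$, matching the bias, and (ii), (iii) are analogous; evaluating bias and variance at $h=h^{*}$ then yields $O(h^{*})$, $O((h^{*})^{1-2\alpha})$, $O(e^{-2c_{g}(1/h^{*})^{\rho}})$ and $O((h^{*})^{2a-1})$.

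The only genuinely delicate step should be the variance estimate, specifically the claim that replacing the integrand of $\int_{|u|\le1/h}(\sum_{j}\Delta_{j}|\phi_{X_{1}}(u)|^{2\Delta_{j}})^{-1}\,\d u$ by its endpoint value loses nothing of the relevant order: this is where the heteroscedastic structure of the denominator — a sum over $j$ of powers $|\phi_{X_{1}}|^{2\Delta_{j}}$ with widely differing exponents — must be controlled uniformly in $j$ through $0<\Delta_{j}\le\DMax$, and where, in the exponentially ill-posed regimes, the polynomial factors produced on the bias and variance sides must be checked against the precise definition of $h^{*}$. The bias side, in contrast, is routine apart from the Laplace-type tail estimate in the compound Poisson case.
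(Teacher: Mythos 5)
Your overall strategy is the right one and is what the paper presumably intends when it says the proposition is ``a direct consequence of Theorem~\ref{Hauptsatz_Risiko}'': specialise the risk bound to the sinc kernel, express the bias as a tail integral of $|\Fourier g|^2$, use the class lower bound on $|\phi_{X_1}|$ (which is non-increasing in $|u|$) to bound $q(u)^{-1}$ on $\{|u|\le 1/h\}$ by its value at $|u|=1/h$, and feed the result into the balancing equation. Your bias estimates are correct in all cases, and the ``endpoint'' variance bound is a valid inequality. For (i) the computation does close: the defining equation gives $\sum_j\Delta_j (h^*)^{2\beta\Delta_j}=(h^*)^{-2}$, so variance $\lesssim (h^*)^{-1}(h^*)^{2}=h^*$, matching the bias.

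However, the step where you assert ``(ii), (iii) are analogous'' hides a genuine gap, because the claimed balancing does not in fact occur there with the displayed $h^*$-equations. In (ii), the defining equation yields $\sum_j\Delta_j e^{-2\Delta_j c_\phi(1/h^*)^\alpha}=(h^*)^{2(1-\alpha)}$, so your variance bound at $h^*$ is $(h^*)^{-1}(h^*)^{2\alpha-2}=(h^*)^{2\alpha-3}$, which for $\alpha\in(0,1/2)$ diverges as $h^*\to 0$ and is far larger than the claimed $(h^*)^{1-2\alpha}$. Moreover this is not just a looseness of the endpoint trick: even the sharp Laplace-type estimate $\int_0^{1/h}e^{2c_\phi\Delta u^\alpha}\d u\asymp (1/h)^{1-\alpha}e^{2c_\phi\Delta(1/h)^\alpha}$ (homogeneous case) gives a variance of order $(h^*)^{3\alpha-3}$, which still dwarfs $(h^*)^{1-2\alpha}$ for all $\alpha<4/5$. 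In (iii)(a) the same exercise with $a=(1-\rho)/2$ gives $\sum_j\Delta_j C_\phi^{\Delta_j}=(h^*)^{\rho-1}e^{2c_g(1/h^*)^\rho}$, whence your (here essentially tight) variance bound is $(h^*)^{-\rho}e^{-2c_g(1/h^*)^\rho}$, exceeding the claimed rate by the polynomial factor $(h^*)^{-\rho}$; (iii)(b) is fine. So ``each displayed equation defining $h^*$ is precisely the balancing identity'' is verified only for (i) and (iii)(b). Either the $h^*$-definitions in (ii) and (iii)(a) interact differently with the variance bound than you supposed (e.g. an exponent of the opposite sign, $h^{2(1-\alpha)}$ rather than $h^{2(\alpha-1)}$, would make (ii) balance), or a polynomial/logarithmic correction to the stated rate has to be accepted; in any case you must actually carry out the substitution for (ii) and (iii), and as written that step fails.
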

The convergence rates summarized above, with $h^*$ defined implicitly, are not particularly intuitive. For this reason, we give some examples to better understand the underlying structure. \\[0.2cm]
\textbf{Examples.} 
\begin{itemize}
\item[(i)] Consider the special case of homogeneous observations, $\Delta_j=\Delta,\  j=1,...,n$. Then, for the function class $\mathcal{G}_{\text \tiny pol}$, the implicit definition of the bandwidth  implies that  $h^*\asymp T^{-\frac{1}{2+2\Delta \beta} }$.  This leads  to the convergence rate 
\begin{align}
\sup_{  g\in \mathcal{G}_{\tiny{\text{pol}} }(\beta,C_\phi,c_\phi, C_g,C)   }\mathbb{E}_g\left[ \left\|g-\hat{g}_{h^*}\right\|_{\lk^2(\R)}^2 \right] = O\left( T^{-\frac{ 1}{2+2\Delta \beta } }  \right),
\end{align} 
which is optimal for estimating g with $\lk^2(\R)$-loss from homogeneous observations.  For the class $\mathcal{G}_{\text{ \tiny exp} }$, we find that $h^*\asymp (\log T/ (2 c_\phi \Delta ))^{-\frac{1}{\alpha}}$. The corresponding convergence rate is logarithmic, $O( \Delta^{1-2\alpha} (\log T)^{-\frac{1-2\alpha}{\alpha} })$.  

(Notice that in the homogeneous case, the weights coincide for all $j=1, ...,n$ and hence cancel in the construction of the estimator. ) 
\item[(ii)]  For non-homogeneous observations, the rates of convergence are intimately connected to the maximal distance between the observation times.  Consider the class $\mathcal{G}_{\text{\tiny pol}} $.  Part (i) of Proposition \ref{Hauptsatz_Raten} then implies that  $h^*  < T^{-\frac{1}{2+2\beta\Dmax}  }$ and consequently, 
\begin{align}
\sup_{  g\in \mathcal{G}_{\tiny{\text{pol}} }(\beta,C_\phi,c_\phi, C_g,C)   }\mathbb{E}_g\left[ \left\|g-\hat{g}_{h^*}\right\|_{\lk^2(\R)}^2 \right] = O\left( T^{-\frac{ 1}{2+2\beta\Dmax  } }  \right).
\end{align}
In particular, for $\Dmax$ small, we approach the rate of convergence $T^{-\frac{1}{2}}$ which is optimal for estimating a density $g$ with $\lk^2(\R)$-loss when $|\Fourier g(u)|\sim |u|^{-1}$.  The estimation of $g$ can hence be understood in analogy with density deconvolution whenever the observations are low frequent, and in analogy with standard density estimation when $\Dmax$ is small. 

In the compound Poisson case, for $\rho>0$,   we find that  $h^*\asymp (\log T)^{-\frac{1}{\rho}}$.  The rate of convergence  is $(\log T)^{\frac{1}{\rho}} T^{-1}$ and hence coincides with the optimal convergence rate for standard density estimation problems with supersmooth densities. In this particular case, $\Dmax$ does not affect the rate of convergence but only appears as a constant factor. 
\end{itemize}
\subsubsection*{Local regularity} 
It has been pointed out in the preceding section that the global regularity of $g$ is linked to the decay behavior of $|\phi|$, which influences the rates of convergence to be obtained. In particular, an exponential decay of $|\phi|$ will always lead to slow logarithmic rates of convergence. 

However, one may as well be interested in the estimation of $g$ on some compact set $\Omega$ bounded away from the origin.  The L\'evy density and hence the function $g$ may then be arbitrarily regular and even infinitely differentiable on $\Omega$. In what follows, we investigate rates of convergence over nonparametric classes of locally smooth functions. 

Recall that for an open interval $D$, the \emph{H\"older class} $ \mathcal{H}_D(a,L, R)  $ consists of those functions $f$ defined on $D$, whose absolute value is bounded above by $R$, which are  $\lfloor a \rfloor$-times continuously differentiable and  for which 
\[
\sup_{ x\not =y \in D}   \frac{|f^{( \lfloor a \rfloor)}(x) - f^{( \lfloor a \rfloor)}(x)     | }{|x-y|^{\alpha -  \lfloor a \rfloor   }    } \leq L.
\]
holds.  In the present case, $\lfloor a \rfloor$ denotes the largest integer which is smaller than (but not equal to)~$\alpha$. 

Let us introduce nonparametric classes of locally H\"older regular functions and corresponding L\'evy processes. In the sequel,   $\mathcal{G}(a,D,L,R,C_1,\beta, C_2, C_3)$  denotes the class of functions $g$ for which the following holds: 
\begin{itemize}
\item[(i)] There exists a L\'evy process $X$  for which (A1)-(A4) holds, with L\'evy density $\eta(x) =g(x)/x$. 
\item[(ii)] $g|_{D}$ can be extended to a function  $\tilde{g}\in \mathcal{H}_{\R}(a,L,R)$  and 
$\|g\|_{\lk^1(\R) }+ \|\tilde{g}\|_{\lk^1(\R)  } <C_1$.    
\item[(iii)]  $\forall u\in \R_+:  |\phi_{{X_1}}(u)| \geq(1+ C_2 |u|^2)^{-\frac{\beta}{2}}$.
\item[(iv)]  $X$ has a finite fourth moment and $ C_4^{\pm} \leq C_3$.
\end{itemize}
The following lemma gives a bound on the bias term for locally H\"older regular functions.

\begin{lemma}
\label{Lemma_Bias} Let $\Omega$ be a compact interval, bounded away from the origin. Assume that there exists an open interval $D\supseteq \Omega$ such that $g|_D\in \mathcal{H}_D(a,L,R)$  and $g|_D$ can be extended to a  function $\tilde{g} \in \lk^1(\R)\cap \mathcal{H}_{\R}(a,L,2 R)$.  Let the kernel be chosen such that $\kf\in \lk^2(\R)$, $\kf$ has the order  $a>0$ and moreover, 
for some constant $C_{\kf}$, 
\begin{equation}
\label{Abfall_Kern}
\forall x\in \R:  \left|\kf(x) \right| \leq C_{\kf} |x|^{-a-1}.
\end{equation}
Then we can estimate for some positive constant $C_b$ depending on  the choice of $\kf$, on $L, R$ and on $\Omega$ and $D$
\begin{eqnarray}
\|g- \kf_h\ast g \|_{\lk^2(\Omega) }^2 \leq  C_b (1+\|g\|_{\lk^1(\R)}  + \|\tilde{g}\|_{\lk^1(\R)} ) h^{2a}.
\end{eqnarray}
\end{lemma}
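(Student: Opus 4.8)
The plan is to split the bias $g - \kf_h \ast g$ over $\Omega$ into a "local" part, where one exploits the H\"older smoothness of the extension $\tilde g$, and a "tail" part coming from the fact that $g$ and $\tilde g$ differ outside $D$. First I would fix $\delta>0$ so small that the $\delta$-neighbourhood of $\Omega$ is still contained in $D$; this is possible since $\Omega$ is compact and $D$ is open with $\Omega\subseteq D$. For $x\in\Omega$, write
\[
g(x) - (\kf_h\ast g)(x) = \underbrace{\tilde g(x) - (\kf_h\ast\tilde g)(x)}_{\text{(I)}} + \underbrace{(\kf_h\ast(\tilde g - g))(x)}_{\text{(II)}},
\]
using that $g(x)=\tilde g(x)$ on $\Omega\subseteq D$. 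Term (I) is the classical kernel bias for a function in $\mathcal H_{\R}(a,L,2R)$ with a kernel of order $a$, and the standard estimate (Taylor expansion to order $\lfloor a\rfloor$, the vanishing-moment property of $\kf$, and the H\"older bound on the top derivative, together with the decay \eqref{Abfall_Kern} which guarantees the relevant moments of $\kf$ are finite) gives $\|(\mathrm{I})\|_{\infty}\le c\,L\,h^{a}$, hence $\|(\mathrm{I})\|_{\lk^2(\Omega)}^2 \le c'\,L^2\,|\Omega|\,h^{2a}$, with $c,c'$ depending only on $\kf$ and $a$.

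For the tail term (II), note $\tilde g - g$ is supported in $\R\setminus D$, so for $x\in\Omega$ the distance from $x$ to the support of $\tilde g - g$ is at least $\delta$. Therefore
\[
|(\mathrm{II})(x)| = \left| \int_{\R\setminus D} \kf_h(x-y)\,(\tilde g(y)-g(y))\,\d y \right| \le \sup_{|z|\ge \delta} |\kf_h(z)| \cdot \|\tilde g - g\|_{\lk^1(\R)}.
\]
By \eqref{Abfall_Kern}, $|\kf_h(z)| = h^{-1}|\kf(z/h)| \le C_{\kf} h^{-1} (|z|/h)^{-a-1} = C_{\kf} h^{a} |z|^{-a-1}$, so $\sup_{|z|\ge\delta}|\kf_h(z)| \le C_{\kf}\,\delta^{-a-1}\,h^{a}$. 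Using $\|\tilde g - g\|_{\lk^1(\R)} \le \|g\|_{\lk^1(\R)} + \|\tilde g\|_{\lk^1(\R)}$, this yields $\|(\mathrm{II})\|_{\infty} \le C_{\kf}\,\delta^{-a-1}\,h^{a}\,(\|g\|_{\lk^1} + \|\tilde g\|_{\lk^1})$, hence $\|(\mathrm{II})\|_{\lk^2(\Omega)}^2 \le |\Omega|\,C_{\kf}^2\,\delta^{-2a-2}\,h^{2a}\,(\|g\|_{\lk^1}+\|\tilde g\|_{\lk^1})^2$. Combining (I) and (II) via $(a+b)^2\le 2a^2+2b^2$ and absorbing all $\kf$-, $L$-, $R$-, $\Omega$- and $\delta$-dependent constants (and the crude bound $(\|g\|_{\lk^1}+\|\tilde g\|_{\lk^1})^2 \le \text{const}\cdot(1+\|g\|_{\lk^1}+\|\tilde g\|_{\lk^1})$, or simply stating the bound with a quadratic dependence that is then dominated on the relevant function class) into a single constant $C_b$ gives the claimed inequality.

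The main obstacle is purely bookkeeping in term (I): one must make precise what "kernel of order $a$" means here — i.e. that $\int \kf = 1$ and $\int x^k \kf(x)\,\d x = 0$ for $1\le k\le \lfloor a\rfloor$, with $\lfloor a\rfloor$ the largest integer strictly below $a$ as stipulated in the text — and check that the decay hypothesis \eqref{Abfall_Kern} indeed makes $\int |x|^{a}|\kf(x)|\,\d x < \infty$ so that the remainder in the Taylor expansion is controlled; this is the only place where \eqref{Abfall_Kern} enters the local part, whereas it enters the tail part (II) through the pointwise decay of $\kf_h$ away from the origin. No probabilistic or inverse-problem machinery is needed — the statement is a deterministic approximation-theoretic fact, and the only subtlety is keeping the dependence of $C_b$ exactly as advertised (on $\kf$, $L$, $R$, $\Omega$, $D$) while pulling out the $\lk^1$-norms explicitly.
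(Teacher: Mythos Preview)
Your proposal is correct and follows essentially the same route as the paper: the identical decomposition $g-\kf_h\ast g=(\tilde g-\kf_h\ast\tilde g)+\kf_h\ast(\tilde g-g)$ on $\Omega$, the standard Taylor argument for the first term using the order of $\kf$, and the same tail bound for the second term via $\sup_{|z|\ge\delta}|\kf_h(z)|\le C_{\kf}\delta^{-a-1}h^{a}$ combined with $\|\tilde g-g\|_{\lk^1}$. Your bookkeeping is in fact cleaner than the paper's (which has apparent exponent typos in the tail estimate), and your remark about the quadratic versus linear dependence on $\|g\|_{\lk^1}+\|\tilde g\|_{\lk^1}$ is the only cosmetic discrepancy with the stated bound---harmless on the function classes considered.
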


\textbf{Comment.} It is important to realize that the condition \eqref{Abfall_Kern} on the kernel is crucial.  The sinc-kernel is thus not a reasonable choice in the present situation, for estimating $g$ on a compact set bounded away from the origin. This is a consequence of the fact that, no matter how $g$ may be locally, it will have discontinuity at zero when the jump activity is infinite, which implies that the function is globally non-smooth. \\[0.2cm]
The following proposition is a direct consequence of the bound on the bias given above, combined with Theorem \ref{Hauptsatz_Risiko}. 
\begin{proposition}
\label{Hauptsatz_Raten}
 Let $\kf$ be such that the assumptions summarized in Lemma \ref{Lemma_Bias} are met.   Moreover, let $\kf$ be supported on 
$[-1,1]$.  Let $h^*$ be implicitly defined as the solution of
\begin{equation} \label{Bandweite_lo_pol}
1= \sum_{j=1}^{n} \Delta_j  h^{ 2 \beta \Delta_j+2a+1}. 
\end{equation} 
Then 
\begin{equation}
\sup_{g\in  \mathcal{G}(a,D,L,R,C_1,\beta, C_2, C_3)}\mathbb{E}_g\left[ \left\|g-\hat{g}_{h^*}\right\|_{\Omega}^2 \right] = O\left( {{h^*}}^{2a}  \right). 
\end{equation}
\end{proposition}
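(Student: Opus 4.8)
The plan is to combine the two ingredients that are already available: the non-asymptotic risk bound of Theorem \ref{Hauptsatz_Risiko} and the bias estimate of Lemma \ref{Lemma_Bias}. Starting from Theorem \ref{Hauptsatz_Risiko}, for $g \in \mathcal{G}(a,D,L,R,C_1,\beta,C_2,C_3)$ and any bandwidth $h$,
\begin{align}
\mathbb{E}_g\left[\|g-\hat{g}_{h,n}\|_\Omega^2\right]
\le 2\|g-\kf_h\ast g\|_{\lk^2(\Omega)}^2
+ C(\DMax, C_4^\pm)\int \frac{|\Fourier\kf(hu)|^2}{\sum_{j=1}^n \Delta_j|\phi_{\Delta_j}(u)|^2}\,\d u.
\end{align}
The first (bias) term is controlled by Lemma \ref{Lemma_Bias}: since $g|_D \in \mathcal{H}_D(a,L,R)$ extends to $\tilde g\in\lk^1(\R)\cap\mathcal{H}_\R(a,L,2R)$ with $\|g\|_{\lk^1}+\|\tilde g\|_{\lk^1}<C_1$, and $\kf$ satisfies the order and decay assumptions of that lemma, the bias term is $O(h^{2a})$ with a constant depending only on the class parameters. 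The second (variance) term is where assumptions (iii) and (iv) and the support of $\kf$ enter.

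For the variance term, I would first use that $\kf$ is supported on $[-1,1]$, so $\Fourier\kf$ is bandlimited-free but more importantly — wait, here it is $\kf$ itself that is compactly supported, hence $\Fourier\kf$ is entire; the key point to exploit is rather that one integrates $|\Fourier\kf(hu)|^2$, and $\Fourier\kf$ is bounded (indeed $\|\Fourier\kf\|_\infty \le \|\kf\|_{\lk^1} < \infty$ since a compactly supported $\lk^2$ function is $\lk^1$) and decays, so that $\int|\Fourier\kf(hu)|^2\,\d u = h^{-1}\|\Fourier\kf\|_{\lk^2}^2$ by Plancherel after substitution. To handle the denominator, use assumption (iii): $|\phi_{X_1}(u)| \ge (1+C_2|u|^2)^{-\beta/2}$, hence $|\phi_{\Delta_j}(u)|^2 = |\phi_{X_1}(u)|^{2\Delta_j} \ge (1+C_2|u|^2)^{-\beta\Delta_j} \gtrsim |u|^{-2\beta\Delta_j}$ for $|u|$ large (and is bounded below by a constant for $|u|$ bounded). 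So, up to constants,
\begin{align}
\int \frac{|\Fourier\kf(hu)|^2}{\sum_{j=1}^n \Delta_j|\phi_{\Delta_j}(u)|^2}\,\d u
\lesssim \int_{|u|\le 1/h} \frac{\d u}{\sum_{j=1}^n \Delta_j (1+C_2 u^2)^{-\beta\Delta_j}},
\end{align}
where the restriction to $\{|u|\le 1/h\}$ comes from $\Fourier\kf(hu)$ being concentrated there (this step needs a little care — if $\kf$ is merely compactly supported one does not literally have $\supp\Fourier\kf \subseteq [-1/h,1/h]$; I would instead split $\R$ into $\{|u|\le 1/h\}$ and $\{|u|>1/h\}$, bound $|\Fourier\kf(hu)|$ by its rapid-but-only-polynomial decay on the tail, and check the tail contributes a lower-order term — alternatively one can simply restrict to the sinc-like behaviour and absorb it). On $\{|u|\le 1/h\}$ one has $(1+C_2u^2)^{-\beta\Delta_j} \ge (1+C_2 h^{-2})^{-\beta\Delta_j} \gtrsim h^{2\beta\Delta_j}$, so the integrand is $\lesssim 1/(\sum_j \Delta_j h^{2\beta\Delta_j})$ and the integral is $\lesssim h^{-1}/(\sum_j \Delta_j h^{2\beta\Delta_j})$. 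By the defining equation \eqref{Bandweite_lo_pol}, $\sum_j \Delta_j h^{*\,2\beta\Delta_j+2a+1} = 1$, i.e. $\sum_j \Delta_j (h^*)^{2\beta\Delta_j} = (h^*)^{-2a-1}$, whence the variance term at $h=h^*$ is $\lesssim (h^*)^{-1}\cdot(h^*)^{2a+1} = (h^*)^{2a}$, matching the bias.

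Putting the two pieces together gives $\mathbb{E}_g[\|g-\hat g_{h^*}\|_\Omega^2] = O((h^*)^{2a})$ uniformly over the class, since all constants depend only on $a,D,L,R,C_1,\beta,C_2,C_3$ and on the fixed kernel $\kf$. The main obstacle I anticipate is the rigorous treatment of the tail $\{|u| > 1/h\}$ in the variance integral: because $\kf$ is only assumed compactly supported with the pointwise decay \eqref{Abfall_Kern} (which bounds $\kf$, not $\Fourier\kf$), one must argue that $\Fourier\kf$ still decays fast enough — smoothness of $\Fourier\kf$ gives decay only if $\kf$ has moments, so one should lean on $\kf$ having order $a$ and being in $\lk^2$, show $\Fourier\kf(hu)$ is negligible for $|u|\gg 1/h$ in the weighted $\lk^2$ sense against the polynomially-growing reciprocal denominator, and confirm this tail is $o((h^*)^{2a})$. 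The monotone dependence of the constant $C$ on $(\DMax, C_4^\pm)$ from Theorem \ref{Hauptsatz_Risiko} together with $C_4^\pm \le C_3$ and $\DMax$ fixed ensures the bound is uniform over the class.
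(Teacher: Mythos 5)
Your proof takes exactly the route the paper intends (the paper in fact asserts the proposition "is a direct consequence" of Lemma \ref{Lemma_Bias} and Theorem \ref{Hauptsatz_Risiko} without spelling out the calculation), and the balancing of the bias $O(h^{2a})$ against the variance $\lesssim h^{-1}/\sum_j\Delta_j h^{2\beta\Delta_j}$ via \eqref{Bandweite_lo_pol} is carried out correctly, with a correct check that the constants are uniform over the class since $C_4^\pm\le C_3$ and $\|g\|_{\lk^1}+\|\tilde g\|_{\lk^1}<C_1$.

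On the tail issue you flag: your unease is well placed if one reads ``$\kf$ supported on $[-1,1]$'' literally, since then the decay of $\Fourier\kf$ is uncontrolled (mere compact support of an $\lk^2$ kernel gives $\Fourier\kf\in C_0\cap\lk^2$ and nothing more), and the polynomially growing reciprocal $1/q(u)$ would not obviously be beaten by $|\Fourier\kf(hu)|^2$ on $\{|u|>1/h\}$. The resolution is that the paper's phrase should be read as $\supp\Fourier\kf\subseteq[-1,1]$: the same wording appears in Theorem \ref{Schranken_DS}, whose stated proof immediately truncates the variance integral to $\int_{-1/h}^{1/h}$, which is only correct when it is the Fourier transform of the kernel that is band-limited. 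This is also the natural class for Lemma \ref{Lemma_Bias}: a flat-top kernel $\kf$ with $\Fourier\kf\in C_c^\infty$ equal to $1$ near $0$ has any prescribed order $a$ and satisfies the pointwise decay \eqref{Abfall_Kern}, whereas if $\kf$ itself were supported on $[-1,1]$ then \eqref{Abfall_Kern} would hold trivially and carry no information. With that reading, the variance integral truncates cleanly at $|u|=1/h$, your lower bound $\sum_j\Delta_j|\phi_{\Delta_j}(u)|^2\gtrsim\sum_j\Delta_j h^{2\beta\Delta_j}$ on $\{|u|\le 1/h\}$ applies with constants depending only on $C_2,\beta,\Dmax$, and the computation closes as you wrote it. So the only gap in your write-up is that you left the tail as an unresolved obstacle rather than recognizing that the hypothesis, correctly interpreted, removes it outright.
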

\textbf{Example.} Again, we investigate how $\Dmax$ influences the rate of convergence to be obtained. 
We find that $h^*\lesssim h'$, with
\begin{align}
h'=  T^{-\frac{1}{2a+2\Dmax\beta+1} }.
\end{align} 
The resulting rate of convergence is faster than or equal to $O(T^{-\frac{2a}{2a+2\Dmax\beta+1}})$. This implies that rate approaches the rate of convergence $T^{-\frac{2a}{2a+1} }$ which is known the be optimal for estimating $g$ (locally) with $\lk^2$-loss  when continuous time observations of the process are available.  \\[0.2cm]
\textbf{Comment.} In the same spirit, we may consider the case where $|\phi(u)|$ decays exponentially,
$|\phi(u)| \sim e^{-c|u|^\gamma}$.  It can then be shown that, with optimal choice of $h^*$, one attains convergence of order   $(\log T)^{-\frac{2a}{\gamma} }$. 

\subsection{Lower bounds}
In the sequel, we show that the rates of convergence presented in the preceding paragraphs are  minimax optimal.  However, to avoid some technical difficulties,  we content ourselves with considering the case of polynomially decaying characteristic functions. 
\begin{theorem}\label{Satz_untere_Schranken}
\begin{itemize}
\item[(i)]
Let $h^*$ be defined according to  \eqref{Bandweite_glo_pol}.  Then there exists a positive constant $c$ such that 
\begin{equation}
\liminf_{n\to \infty}\inf_{\check{g}} \sup_{ g\in \mathcal{G}_{\tiny{\text{pol}} }(\beta,C_\phi,c_\phi, C_g,C)   }   \mathbb{E}_{g} \left[\| g- \check{g} \|_{\lk^2(\R)}^2  \right]  (h^*)^{-1} \geq c.
\end{equation}
The infimum is taken over the collection of estimators $\tilde{g}_n$ based on the observations
$X_{{t_1}}, \cdots, X_{{t_n}}$.
\item[(ii)] Let $h^*$ be defined according to  \eqref{Bandweite_lo_pol}.  Then there exists a positive constant $c$ such that 
\begin{equation}
\liminf_{n\to \infty}\inf_{\check{g}} \sup_{g\in  \mathcal{G}(a,D,L,R,C_1,\beta, C_2, C_3)}\mathbb{E}_g\left[ \left\|g-\check{g}\right\|_{\lk^2(\Omega) }^2 \right](h^*)^{-2a} \geq c. 
\end{equation}
\end{itemize}  
\end{theorem}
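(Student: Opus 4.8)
Both parts are proved by the classical reduction of a minimax lower bound to a multiple hypothesis testing problem: one constructs a finite family $g_\theta$, $\theta\in\{0,1\}^K$, of elements of the relevant class that are pairwise well separated in the loss but whose induced laws of the observations $X_{t_1},\dots,X_{t_n}$ are statistically close, and then applies Assouad's lemma (equivalently, Varshamov--Gilbert together with Fano's inequality). I describe the construction for part (i); part (ii) is entirely parallel. As base process take a (one-- or two--sided) gamma--type L\'evy process $X^0$ with $\eta_0(x)\asymp|x|^{-1}e^{-|x|}$, so that $g_0(x)=x\eta_0(x)$ is bounded with exponential tails, $|\Fourier g_0(u)|\asymp(1+|u|)^{-1}$ and $|\phi_{X^0_1}(u)|=(1+u^2/\gamma^2)^{-\beta/2}$; for a suitable scale $\gamma$ and matching constants, $X^0$ lies strictly inside $\mathcal G_{\tiny{\text{pol}}}$, and for any $\Delta>0$ the increment $X^0_{\Delta}$ has a (possibly unbounded, possibly non--$\lk^2$) density with exponential tails.

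Fix a frequency level $M$, put $K\asymp M$, and let $\eta_1,\dots,\eta_K$ be real, smooth, mean--zero perturbations of the L\'evy density, all supported in a fixed compact interval bounded away from $0$, obtained by modulating one fixed $C_c^\infty$ profile at frequencies $\xi_k\in[M,2M]$ spaced by $\asymp M/K$ and scaled by an amplitude $b$, and arranged so that $\int\eta_k=0$ (hence $\Fourier\eta_k(0)=0$), the Fourier transforms $\Fourier\eta_k$ live in essentially disjoint bands around $\pm\xi_k$ with $\|\Fourier\eta_k\|_\infty\lesssim bM$ and $\|\psi_k\|_{\lk^2}^2\asymp b^2M^2$ where $\psi_k(x):=x\eta_k(x)$. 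Set $g_\theta:=g_0+\sum_k\theta_k\psi_k$. With the calibration $M\asymp 1/h^*$ ($h^*$ as in \eqref{Bandweite_glo_pol}), $b\asymp(h^*)^2$ and $K\asymp 1/h^*$ — which makes $KbM\lesssim1$ and $bM^2\lesssim1$ — one checks, for $n$ large, that every $g_\theta$ is admissible: $g_\theta(x)/x\ge0$ (the perturbation is dominated by $\eta_0$ on its support and vanishes elsewhere); the mass--zero condition leaves the small--jump behaviour and the jump intensity unchanged, so $C^{\pm}_4(\theta)\le C$; the characteristic exponent is perturbed by $O(bM)=o(1)$ uniformly and by $O(bMu^2)$ near $u=0$, so $|\phi_{X^\theta_1}(u)|\ge(1+C_\phi u^2)^{-\beta/2}$ survives; and at frequencies $|u|\asymp M$ only $O(1)$ of the disjoint bands overlap, contributing $O(bM)\ll C_g/M$ to $|\Fourier g_\theta(u)|$, so $|\Fourier g_\theta(u)|\le C_g|u|^{-1}$ holds. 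Thus $\{g_\theta\}\subset\mathcal G_{\tiny{\text{pol}}}(\beta,C_\phi,c_\phi,C_g,C)$.

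By Parseval and the disjointness of the bands, $\|g_\theta-g_{\theta'}\|_{\lk^2(\R)}^2\asymp d(\theta,\theta')\,b^2M^2$, with $d(\cdot,\cdot)$ the Hamming distance. For the statistical distance, independence of the increments $Z_j=X_{t_j}-X_{t_{j-1}}$ gives $\KL(\PP_{g_\theta}\|\PP_{g_{\theta'}})=\sum_{j=1}^n\KL(\mathcal L_{g_\theta}(Z_j)\|\mathcal L_{g_{\theta'}}(Z_j))$. Writing $\phi_{\Delta_j,\theta}=e^{\Delta_j\Psi_\theta}$ with $\Psi_\theta-\Psi_0=\sum_k\theta_k\Fourier\eta_k$, the difference $\phi_{\Delta_j,\theta}-\phi_{\Delta_j,\theta'}=e^{\Delta_j\Psi_0}(e^{\Delta_j(\Psi_\theta-\Psi_0)}-e^{\Delta_j(\Psi_{\theta'}-\Psi_0)})$ lies in $\lk^1\cap\lk^2$ (since $\Fourier\eta_k$ decays faster than any polynomial), hence the densities of $Z_j$ differ by an $\lk^2$ function and Plancherel applies; as the reference density is only small (not zero) on compact sets and has tails comparable to those of the perturbed one, one obtains $\KL(\mathcal L_{g_\theta}(Z_j)\|\mathcal L_{g_{\theta'}}(Z_j))\lesssim\|f_{\Delta_j,\theta}-f_{\Delta_j,\theta'}\|_{\lk^2}^2+(\text{controlled tail terms})\lesssim\Delta_j^2\int|\phi_{\Delta_j,0}(u)|^2\,\big|\textstyle\sum_k(\theta_k-\theta'_k)\Fourier\eta_k(u)\big|^2\d u$. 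Inserting $|\phi_{\Delta_j,0}(u)|^2\asymp(1+|u|)^{-2\beta\Delta_j}$ and using the disjoint bands gives $\KL(\mathcal L_{g_\theta}(Z_j)\|\mathcal L_{g_{\theta'}}(Z_j))\lesssim d(\theta,\theta')\,\Delta_j^2\,b^2M^{2-2\beta\Delta_j}$, and summing over $j$ with $M=1/h^*$, $b\asymp(h^*)^2$ and $\Delta_j\le\DMax$ — this is exactly where \eqref{Bandweite_glo_pol}, i.e. $\sum_j\Delta_j(h^*)^{2\beta\Delta_j+2}=1$, enters — collapses the sum to $O(\DMax\,d(\theta,\theta'))$. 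Choosing the amplitude constant small enough, Assouad's lemma yields $\inf_{\check g}\sup_{g\in\mathcal G_{\tiny{\text{pol}}}}\mathbb{E}_g[\|g-\check g\|_{\lk^2(\R)}^2]\gtrsim K\,b^2M^2\asymp h^*$. For part (ii) one uses instead spatial, spatially disjoint mean--zero $C_c^\infty$ bumps $\psi_k=x\,\eta_k$ of width $h$ and height $b$, centred at $\asymp 1/h$ points of $\Omega$; the H\"older constraint forces $b\asymp h^{a}$; taking $h\asymp h^*$ with $h^*$ as in \eqref{Bandweite_lo_pol}, the same verification shows $\{g_\theta\}\subset\mathcal G(a,D,L,R,C_1,\beta,C_2,C_3)$, the separation is $\asymp d(\theta,\theta')\,b^2h$, the per--summand divergence is $\lesssim d(\theta,\theta')\,\Delta_j^2 b^2 h^{2\beta\Delta_j+1}$, and \eqref{Bandweite_lo_pol} collapses $\sum_j\Delta_j^2 h^{2\beta\Delta_j+1}$ to $O(\DMax(h^*)^{-2a})$, so Assouad gives $\inf_{\check g}\sup_g\mathbb{E}_g[\|g-\check g\|_{\lk^2(\Omega)}^2]\gtrsim Kb^2h\asymp(h^*)^{2a}$.

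The delicate step is the Kullback--Leibler bound above: one has to pass from a bound on the perturbation of $\Psi$ to a bound on the distance between the observation laws uniformly over all gaps $\Delta_j\in(0,\DMax]$, even though $X_{\Delta_j}$ need not admit a bounded — or even square--integrable — density for small $\Delta_j$, which is why one works with the \emph{difference} of densities (forced into $\lk^2$ by $\Fourier\eta_k(0)=0$) and treats the tails separately, and then must reconcile the resulting weighted $\lk^2$--norm of characteristic functions with the implicitly defined bandwidths via \eqref{Bandweite_glo_pol} and \eqref{Bandweite_lo_pol}. A secondary difficulty, specific to part (i), is that the class constraint $|\Fourier g(u)|\le C_g|u|^{-1}$ would be violated by a naive superposition of many bumps; modulating the perturbations to essentially disjoint frequency bands is precisely what repairs this while keeping the whole family inside $\mathcal G_{\tiny{\text{pol}}}$.
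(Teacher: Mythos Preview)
Your overall strategy coincides with the paper's: a bilateral gamma--type base process, frequency--localised perturbations of the L\'evy density indexed by a hypercube, and a reduction via Varshamov--Gilbert/Assouad with the calibration dictated by \eqref{Bandweite_glo_pol} and \eqref{Bandweite_lo_pol}. The paper in fact uses the \emph{same} frequency construction for both (i) and (ii), namely $h_{n,j}(x)=dK_n\,2\sin((m_n+j)x)\,f_{\mathcal N(0,1)}(x)$ with $K_n=m_n^{-1}$ for (i) and $K_n=m_n^{-a-1/2}$ for (ii), and passes from $\lk^2(\R)$ to $\lk^2(\Omega)$ via the Gaussian localisation; your alternative of spatial bumps for (ii) is a legitimate variant.

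The gap is your Kullback--Leibler step. You write $\KL(\mathcal L_{g_\theta}(Z_j)\|\mathcal L_{g_{\theta'}}(Z_j))\lesssim\|f_{\Delta_j,\theta}-f_{\Delta_j,\theta'}\|_{\lk^2}^2+\text{(tails)}$ and then apply Plancherel; but $\KL$ is controlled by $\chi^2=\int(f_\theta-f_{\theta'})^2/f_{\theta'}$, not by the plain $\lk^2$ distance, and for small $\Delta_j$ the reference density $f_{\Delta_j,0}$ is neither bounded below on compacta (it concentrates at the origin) nor bounded above near the origin, so no uniform constant relates the two. Forcing the \emph{difference} into $\lk^2$ via $\Fourier\eta_k(0)=0$ does not repair this: you still divide by $f_{\Delta_j,0}$, and the quotient is exactly what can explode when $\Delta_j\to 0$. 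The paper sidesteps the issue by bounding $\KL\le\chi^2$ and invoking the computation from Neumann and Rei\ss\ (2009), which crucially exploits the Gaussian envelope: the perturbed and unperturbed densities then share Gaussian tails and the $\chi^2$ integral can be evaluated directly, yielding $\chi^2(\PP_{S,n,j},\PP_{0,n,j})\lesssim |S|\,\Delta_j(1+m_n)^{-2\Delta_j\beta}K_n^2$ uniformly over $\Delta_j\in(0,\Dmax]$. With your compactly supported $C_c^\infty$ profiles that tail--matching argument is not available, and your sketch does not supply a substitute. (There is also a minor scaling inconsistency: for a fixed compactly supported profile modulated at frequency $\xi_k$, one gets $\|\Fourier\eta_k\|_\infty\asymp b$ and $\|\psi_k\|_{\lk^2}^2\asymp b^2$, not $bM$ and $b^2M^2$; this is easily fixed but affects your calibration.)
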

\section{Estimating the distributional density}\label{Abschnitt_Dichteschaetzung}

Let us have a look at the situation where one is interested in estimating the density  $f$ of $X_1$ rather than the underlying L\'evy measure.   We can now drop the technical assumptions (A1)-(A4) on the process, thus allowing a non-zero Gaussian part, a high activity of small jumps and the existence of a drift.  In the sequel, it is only assumed that the density $f$ of $X_1$ exists and is square integrable on the whole real axes, thus excluding processes of compound Poisson type.  

 Without specifying any particular assumptions on the L\'evy measure, drift or Gaussian part, the estimation procedure proposed in Section 3  can still  be used to build an estimator of  the derivative of the characteristic exponent,
\begin{align}
\hat{\Psi'} (z) : = \frac{\hat{p} (z) }{\tilde{q}(z) }.
\end{align}
Since $\Psi(0)=0$ holds by definition of the characteristic exponent, a corresponding estimator of the characteristic exponent can be defined as follows. 
\[
\hat{\Psi}(u) : = \intu  \hat{\Psi}'(z) \d z =\intu \frac{\hat{p}(z)}{\tilde{q}(z)} \d z.
\] 
The characteristic function $\phi(u)=\exp(\Psi(u) )$ of $X_1$   can then be estimated by  $ \check{\phi}(u) :=e^{\hat{\Psi}(u)}$.

However, there is a priori no guarantee that $\check{\phi}$ is a characteristic function and the absolute value may be larger than one. For this reason, we introduce an additional threshold, thus defining the final estimator of $\phi$  
\begin{equation}
\hat{\phi}(u) := \frac{ \check{\phi}(u)  }{\max\{1,|\check{\phi}(u)|\}}.
\end{equation}
Given a kernel function $\kf$ and bandwidth $h>0$, the density $f$  is then estimated using kernel smoothing and Fourier inversion, 
\begin{equation}
\hat{f}_{h}(x):= \frac{1}{2\pi}  \int e^{-iux} \Fourier \kf(h u) \hat{\phi}(u) \d u.
\end{equation}
\begin{theorem} \label{Schranken_DS} Let $\kf$ be supported on $[-1,1]$. Assume that $\E{|X|^{4m} }<\infty$.

Then there exists some $C: \R^2  \to \R_+$ which is monotonously increasing in both components  such that 
\begin{align}
&  \ebig{ \| f  -\hat{f}_{h } \|_{\lk^2}^2    }   
\leq    2 \| f- \kf_h\ast f \|_{\lk^2}^2+ C(\DMax, C_{4}^{\pm}  )  \inth |\phi(u)|^2 C_{\Psi,1}(u)  \intu \frac{1}{|q(x)|}  \d x  \d u  \\
& +   \inth   \Big(C_{\Psi,1}(u)  \intu \frac{1}{|q(x)|}  \d x \Big)^m  \max\Big\{1, C_{\Psi,2} (u)      \intu \frac{1}{|q(x)|}  \d x \Big\}^m\d u,
\end{align}
with 
\[
C_{\Psi,1}(u):= ( \|\Psi'\|_{\lk^2,u}^2 +\|\Psi''\|_{\lk^1,u}  ) \vee 1  \quad   \text{and}
\quad C_{\Psi,2}(u): = \| \Psi'\|_{\infty,u} .
\]
 \end{theorem}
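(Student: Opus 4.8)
The plan is to follow the usual bias–variance split for kernel density estimators and reduce everything to controlling the stochastic fluctuation of $\hat\phi$. By Plancherel,
\begin{align}
\ebig{ \| f - \hat f_h \|_{\lk^2}^2 } \le 2 \| f - \kf_h * f\|_{\lk^2}^2 + \frac{1}{\pi}\inth \ebig{ |\Fourier\kf(hu)|^2\, |\hat\phi(u) - \phi(u)|^2 }\, \d u,
\end{align}
using that $\kf$ is supported on $[-1,1]$ so the truncated integral $\inth$ appears. The first term is the bias, already in the stated form. Everything then rests on a pointwise bound for $\ebig{|\hat\phi(u)-\phi(u)|^2}$. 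First I would pass from $\hat\phi$ to $\check\phi=e^{\hat\Psi}$: since $\hat\phi = \check\phi/\max\{1,|\check\phi|\}$ and $|\phi|\le 1$, the thresholding is $1$-Lipschitz toward the unit disc, so $|\hat\phi(u)-\phi(u)| \le |\check\phi(u)-\phi(u)|$ and it suffices to bound $\ebig{|\check\phi(u)-\phi(u)|^2} = \ebig{|e^{\hat\Psi(u)} - e^{\Psi(u)}|^2}$.

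Next I would linearize the exponential. Writing $\hat\Psi(u)-\Psi(u) = \int_0^u (\hat\Psi'(z) - \Psi'(z))\,\d z =: R(u)$ and using $|e^{\hat\Psi}-e^\Psi| = |e^\Psi|\,|e^{R}-1| \le |\phi(u)|\,( |R(u)| + |R(u)|^2 e^{|R(u)|} )$ (or more simply $|e^a - e^b|\le (|a|+|b|)\max\{e^{|a|},e^{|b|}\}\cdots$; the precise elementary inequality $|e^R - 1| \le |R| e^{|R|}$ suffices), I would split
\begin{align}
\ebig{ |\check\phi(u)-\phi(u)|^2 } \lesssim |\phi(u)|^2\, \ebig{ |R(u)|^2 } + \ebig{ |R(u)|^{2m}\, e^{2|R(u)|} },
\end{align}
which already matches the two integral terms in the statement if $\ebig{|R(u)|^2} \lesssim C_{\Psi,1}(u)\intu |q(x)|^{-1}\d x$ and the high-order term produces the $m$-th powers together with the $\max\{1,C_{\Psi,2}(u)\int\cdots\}^m$ factor. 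Controlling $\ebig{|R(u)|^{2k}}$ for $k=1$ and $k=m$ is therefore the technical heart of the argument.

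For that I would go back to $\hat\Psi'(z)-\Psi'(z) = \hat p(z)/\tilde q(z) - p(z)/q(z)$ and use the decomposition into a "numerator" deviation and a "denominator" deviation: on the event $\{|\hat q(z)|\ge \max\{\sigma(z),\kappa\}\}$ one writes
\begin{align}
\frac{\hat p}{\tilde q} - \frac pq = \frac{\hat p - p}{\tilde q} + p\Big(\frac{1}{\tilde q} - \frac 1q\Big),
\end{align}
bounds $1/|\tilde q| \le 1/|\hat q|$ and, via the Neumann-type regularization as in Theorem \ref{Hauptsatz_Risiko}, controls $\ebig{|1/\tilde q(z) - 1/q(z)|^{2k}}$ and $\ebig{|\hat p(z)-p(z)|^{2k}}$ by moments of $Z_j$ (hence by $C_4^\pm$, $\DMax$, since $\E|X_1|^{4m}<\infty$ guarantees enough moments) times $|q(z)|^{-2k}$ up to constants. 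This is exactly the estimate already carried out in the proof of Theorem \ref{Hauptsatz_Risiko}, now needed at the $2m$-th moment level and integrated over $z\in[0,u]$ rather than pointwise. Then by Jensen / Minkowski in $L^{2k}$,
\begin{align}
\ebig{ |R(u)|^{2k} } = \ebig{ \Babs{\intu (\hat\Psi'(z)-\Psi'(z))\,\d z}^{2k} } \le u^{2k-1} \intu \ebig{ |\hat\Psi'(z)-\Psi'(z)|^{2k} }\,\d z,
\end{align}
and the factors $\|\Psi'\|_{\lk^2,u}^2$, $\|\Psi''\|_{\lk^1,u}$, $\|\Psi'\|_{\infty,u}$ enter when one accounts for the bias contribution of the regularization (the deterministic part $p/q$ evaluated where $\hat q$ falls below threshold, handled by a deviation bound for $\hat q - q$ combined with $|\Psi'| = |p/q|$). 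Absorbing all constants depending monotonically on $\DMax$ and $C_4^\pm$ into $C(\cdot,\cdot)$ gives the claimed bound.

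The main obstacle I expect is the bookkeeping of the high-order ($2m$-th) moment bounds for the regularized ratio together with the deterministic truncation error: one must show that the "bad event" $\{|\hat q(z)| < \max\{\sigma(z),\kappa\}\}$ contributes, after taking $m$-th powers and integrating in $z$, precisely the term with $\big(C_{\Psi,1}(u)\intu|q(x)|^{-1}\d x\big)^m \max\{1,C_{\Psi,2}(u)\intu|q(x)|^{-1}\d x\}^m$, and in particular that the factor $e^{2|R(u)|}$ is tamed — this is where the $\max\{1,\cdot\}^m$ structure and the truncation at $\pm 1/h$ must be used carefully, since $|R(u)|$ is not bounded a priori and one needs the thresholds $\sigma(z),\kappa$ to keep $\E e^{2|R(u)|}$ finite on the relevant event.
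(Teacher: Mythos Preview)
Your overall skeleton (Plancherel split, pass to $\check\phi$, linearize the exponential, then control moments of $R(u)=\hat\Psi(u)-\Psi(u)$) is right, but two steps would not go through as written.

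First, once you replace $\hat\phi$ by $\check\phi$ you lose the trivial bound $|\hat\phi-\phi|\le 2$, and $|\check\phi-\phi|$ is \emph{not} bounded. Your inequality $|e^R-1|\le |R|e^{|R|}$ then forces you to control $\ebig{|R(u)|^{2m}e^{2|R(u)|}}$, and under the sole assumption $\E|X_1|^{4m}<\infty$ this expectation can be infinite: $|\hat p(z)|$ involves $|Z_j|$ linearly, $1/|\tilde q|\le 1/\kappa$, and nothing guarantees exponential moments of the increments. The paper avoids this entirely by keeping \emph{both} bounds $|\hat\phi-\phi|\le|\check\phi-\phi|$ and $|\hat\phi-\phi|\le 2$ and splitting on the event $\{|R(u)|\le 1\}$: on that event $|1-e^{R(u)}|\le 2|R(u)|$ gives the term $4|\phi(u)|^2|R(u)|^2$, while on the complement one simply uses $4\,\mathds{1}_{\{|R(u)|>1\}}\le 4|R(u)|^{m}$. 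No exponential factor ever appears.

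Second, your Jensen/Minkowski bound $\ebig{|R(u)|^{2k}}\le u^{2k-1}\intu \ebig{|\hat\Psi'(z)-\Psi'(z)|^{2k}}\d z$ is too crude: it produces an extraneous factor $u^{k}$ (roughly, $u^{2k}/q_0^k$ instead of $(u/q_0)^k$ when $q\approx q_0$) and does not yield the stated $C_{\Psi,1},C_{\Psi,2}$ structure. The paper's key lemma treats $\intu(\hat p-p)/q\,\d x$ as a sum of independent terms over $j$, applies the Rosenthal inequality to the \emph{integrated} quantities $\intu w_j(\hat\phi_j'-\phi_j')/q\,\d x$, expands their $k$-th moments as multiple integrals over $[0,u]^k$ of $\phi_j^{(k)}\big(\sum_\ell(\m1)^{\ell+1}x_\ell\big)\prod_\ell w_j(x_\ell)/q(x_\ell)$, and then integrates by parts repeatedly to reduce $\phi_j^{(k)}$ to $\phi_j''$ while differentiating $w_j/q$ (each differentiation producing a factor $\|\Psi'\|_{\infty,u}$). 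This is precisely what generates the bound $\big(C_{\Psi,1}(u)\intu 1/q\big)^m\big(\max\{1,C_{\Psi,2}(u)^2\intu 1/q\}\big)^m$ without losing powers of $u$; a pointwise-in-$z$ estimate cannot recover this.
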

\subsubsection*{Rates of convergence}
For $\beta>\frac{1}{2}$, let  $\mathcal{F}_{\text{\tiny pol} }(\beta, m, C_1,C_2,C_3, C )$ be the class of  densities  corresponding to an infinitely divisible distribution for which the following holds.
\begin{itemize}
\item[(i)]  For the characteristic function $\phi$ of $f$, 
\begin{align}
\forall u \in \R:      ( 1+C_1|u|)^{-\beta  }  \leq |\phi(u)|  \leq ( 1+C_2|u|)^{-\beta  }.
\end{align}
\item[(ii)]  $\Psi'$ is square integrable and $\Psi''$ is integrable, with 
\[
\|\Psi'\|_{\lk^2(\R)}^2 +\|\Psi''\|_{\lk^1(\R)}  +\|\Psi'\|_{\infty}   < C_3
\] 
\item[(iii)] The random variable $X_1$ with density $f$ has finite moments up to order $4m$  and  $C_{4m}^{\pm}\leq C $.
\end{itemize}
This function class contains, for example, the densities of gamma- or bilateral gamma distributions. \\[0.2cm]
For $\alpha\in(0,2]$, let $\mathcal{F}_{\text{\tiny exp}  }(\alpha,m, C_1, C_2, C_3,C)$ be the class of infinitely divisible densities such that 
\begin{itemize}
\item[(i)] $ 
\forall u\in \R: C_1\exp( - c|u|^{\alpha}  ) \leq |\phi(u)| 
\leq  C_2 \exp( - c|u|^{\alpha}  )
$ 
\item[(ii)] $
  \|\Psi'\|_{\lk^2,u}^2 +\|\Psi'\|_{\lk^1,u}  +\|\Psi'\|_{\infty,u}  \leq \begin{cases}
C_3, &\text{for } \alpha <\frac{1}{2}  \\
C_3 (\ln |u| \vee 1) , & \text{for }  \alpha=\frac{1}{2}    \\
C_3 |u|^{2\alpha - 1}. &\text{for }  \alpha >\frac{1}{2} . 
\end{cases}  $
\item[(iii)]  The moments up to order $4m$ are finite  and $C_{4m}^{\pm} \leq C$. 
\end{itemize}
Typical examples are tempered stable laws with index of stability $\alpha$,  or (for $\alpha=2$) processes with non-zero Gaussian part.\\[0.2cm] 
In the sequel, we define, for the exponential or polynomial decay scenario, respectively,  
\[
|\phi_{\text{\tiny reg }} (u)| := (1+|u|)^{-\beta }  \quad \text{or } \quad |\phi_{\text{\tiny reg }} (u) |:=  \exp( -c |u|^{\alpha} ) . 
\]
Moreover, 
\[
q_{\text{\tiny reg}}(u) := \sum_{j=1}^{n} \Delta_j |\phi_{\text{\tiny reg }}(u) |.
\]
The convergence rates over nonparametric function classes summarized in the following Proposition are immediate consequences of Theorem \ref{Schranken_DS}, performing the compromise between the bias and variance term. 
\begin{proposition}\label{Hauptsatz_Raten_DS}  Let $\kf$ be the sinc-kernel, $\Fourier \kf =\mathds{1}_{[-1,1]}$.  
\begin{itemize}  
\item [(i)]Consider $\beta >\frac{1}{2}$ and $\alpha\in (0,1)$. Then, with $h^*$ implicitly defined implicitly, as the solution to 
\begin{align}\label{hdef_1}
|\phi_{\text{\tiny reg}}(1/h)|^2 = \Big(  \int_0^{1/h}\limits \frac{1}{q_{\text{\tiny reg}}   (x) }  \d x \Big)^k,
\end{align} 
 we derive that 
\begin{align}
\sup_{f\in \mathcal{F}_{\text\tiny{pol} }(\beta, m, C_1, C_2, C_3,C) } \ebig{\| f- \hat{f} \|_{\lk^2(\R)}^2 } = O \big( (h^*)^{2\beta -1}  \vee T^{-1}  \big)
\end{align}
and 
\begin{align}
\sup_{f\in \mathcal{F}_{\text\tiny{exp} }(\alpha, m, C_1, C_2, C_3,C) }   \ebig{\| f- \hat{f} \|_{\lk^2(\R)}^2 } = O \big(h^{\alpha -1}  \exp(-2c(1/h^*)^{\alpha}  ) \vee T^{-1}  \big)  . 
\end{align}
\item[(ii)] Consider $\alpha=\frac{1}{2}$ or $\alpha \in(\frac{1}{2},2]$.  Let $h^*$ be the solution to 
\begin{align} \label{hdef_2}
|\phi_{\text{\tiny reg}}(1/h)|^2 = \Big(\ln(1/h)   \int_0^{1/h} \limits \frac{1}{q_{\text{\tiny reg}}   (x) }  \d x \Big)^k
\quad \text{or}  \quad  |\phi_{\text{\tiny reg}}(1/h)|^2 = \Big( (1/h)^{2\alpha - 1}    \int_0^{1/h} \limits  \frac{1}{q_{\text{\tiny reg}}   (x) }  \d x \Big)^k.
\end{align}
Then it follows that 
\begin{align}
\sup_{f\in \mathcal{F}_{\text\tiny{exp} }(\alpha, m, C_1, C_2, C_3,C) }   \ebig{\| f- \hat{f} \|_{\lk^2(\R)}^2 } = O \big( (h^{\alpha -1}\vee 1)   \exp(-2c(1/h^*)^{\alpha}  ) \vee T^{-1}  \big)  . 
\end{align}
\end{itemize} 
\end{proposition}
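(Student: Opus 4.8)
The plan is to deduce the proposition directly from Theorem~\ref{Schranken_DS} by inserting the sinc kernel and then bounding, in terms of the bandwidth $h$, each of the three summands on its right-hand side; the defining properties of the classes $\mathcal{F}_{\text{\tiny pol}}$ and $\mathcal{F}_{\text{\tiny exp}}$ enter through the two-sided control of $|\phi|$, the bounds on $\Psi'$ and $\Psi''$, and the moment condition. The announced rates then come out by balancing the dominating variance contribution against the bias, which is precisely what the implicit equations \eqref{hdef_1} and \eqref{hdef_2} encode.

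I would start with the bias. Since $\Fourier\kf=\mathds{1}_{[-1,1]}$, Plancherel's identity gives $\|f-\kf_h\ast f\|_{\lk^2}^2=\tfrac1{2\pi}\int_{\{|u|\ge 1/h\}}|\phi(u)|^2\,\d u$, and inserting the upper bound on $|\phi|$ from the class together with an elementary tail estimate yields $\|f-\kf_h\ast f\|_{\lk^2}^2\lesssim h^{2\beta-1}$ for $\mathcal{F}_{\text{\tiny pol}}$ (here $\beta>\tfrac12$ makes the tail integrable) and, via the Laplace-type asymptotics $\int_a^\infty e^{-2cz^\alpha}\,\d z\asymp a^{1-\alpha}e^{-2ca^\alpha}$, the bound $\|f-\kf_h\ast f\|_{\lk^2}^2\lesssim (h^{\alpha-1}\vee 1)\,e^{-2c(1/h)^\alpha}$ for $\mathcal{F}_{\text{\tiny exp}}$.

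Next I would handle the two variance-type integrals in Theorem~\ref{Schranken_DS}. The inner integral is dealt with first: the lower bound $|\phi(u)|\gtrsim|\phi_{\text{\tiny reg}}(u)|$ available in the class, combined with $\Delta_j\le\DMax$, produces a lower bound of the form $|q(u)|\gtrsim q_{\text{\tiny reg}}(u)$, whence $\intu|q(x)|^{-1}\,\d x\le c\,V(u)$ with $V(u):=\intu q_{\text{\tiny reg}}(x)^{-1}\,\d x$; moreover $q_{\text{\tiny reg}}(x)\ge q_{\text{\tiny reg}}(0)=T$, so $V(u)\le u/T$, which is the source of the parametric floor $T^{-1}$. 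The factors $C_{\Psi,1}(u)$ and $C_{\Psi,2}(u)$ are then controlled by assumption~(ii) of the relevant class: they are bounded for $\mathcal{F}_{\text{\tiny pol}}$ and for $\mathcal{F}_{\text{\tiny exp}}$ with $\alpha<\tfrac12$, are $O(\ln u\vee 1)$ for $\alpha=\tfrac12$, and are $O(u^{2\alpha-1})$ for $\alpha>\tfrac12$. Writing $\Lambda$ for this slowly varying factor, the first variance integral becomes $\lesssim\int_{\{|u|\le1/h\}}|\phi_{\text{\tiny reg}}(u)|^2\,\Lambda(u)\,V(u)\,\d u$ and the remainder becomes $\lesssim\int_{\{|u|\le1/h\}}\big(\Lambda(u)V(u)\big)^m\max\{1,\Lambda(u)V(u)\}^m\,\d u$. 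Since $|\phi_{\text{\tiny reg}}|$ is decreasing while $V$ and $\Lambda$ are increasing, each of these is dominated — up to constants, the factor $1/h$ bounding the length of the integration interval, and, in the first case, the finiteness of $\int|\phi_{\text{\tiny reg}}|^2$ — by its integrand evaluated at the endpoint $u=1/h$, which turns both into explicit functions of $h$.

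It then remains to optimise over $h$. By construction, the equations \eqref{hdef_1} and \eqref{hdef_2} equate $|\phi_{\text{\tiny reg}}(1/h)|^2$, which by the first step is the order of the bias, with $\big(\Lambda(1/h)V(1/h)\big)^{k}$, i.e.\ with the dominating part of the remainder term (the exponent $k\in\{m,2m\}$ being the one dictated by whether $V(1/h)\le1$ or $V(1/h)>1$, that is, by which of $1$ and $\Lambda(1/h)V(1/h)$ attains the maximum in Theorem~\ref{Schranken_DS}). Consequently, at $h=h^*$ the bias and the remainder are of the same order, the first variance integral is of that order or smaller, and everything is at least $T^{-1}$ by the bound $V(u)\le u/T$; this yields the rates $O((h^*)^{2\beta-1}\vee T^{-1})$ in the polynomial case and $O((h^{\alpha-1}\vee1)\exp(-2c(1/h^*)^\alpha)\vee T^{-1})$ in the exponential case. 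The part I expect to be most delicate is exactly this reduction of the two nested variance integrals to endpoint quantities while keeping simultaneous track of the three regimes $\alpha\lessgtr\tfrac12$ (through $\Lambda$) and of the correct value of $k$ in the bandwidth equation; the remaining work — the tail and Laplace-type integrals for the bias and for $V$, and the explicit evaluation at $u=1/h$ — is routine bookkeeping.
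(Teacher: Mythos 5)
The paper offers no proof of this proposition: after stating Theorem~\ref{Schranken_DS}, the text merely remarks that the rates ``are immediate consequences of Theorem~\ref{Schranken_DS}, performing the compromise between the bias and variance term.'' Your plan --- insert the sinc kernel, bound bias and both variance-type integrals in terms of $h$ via the class conditions, and then read off the balance from \eqref{hdef_1} and \eqref{hdef_2} --- is exactly the argument the paper has in mind, and the bias and $C_{\Psi,i}$ estimates are fine. So the overall route is the right one.

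There is, however, a concrete sign error that propagates into your explanation of the $T^{-1}$ floor. You assert $q_{\text{\tiny reg}}(x)\ge q_{\text{\tiny reg}}(0)=T$ and conclude $V(u)\le u/T$. This is backwards: since $|\phi_{\text{\tiny reg}}|\le 1$ one has $q_{\text{\tiny reg}}(x)\le\sum_j\Delta_j=T$ for every $x$, $q_{\text{\tiny reg}}$ is nonincreasing, and therefore $V(u)\ge u/T$. An upper bound on $V$ requires the \emph{decay} of $q_{\text{\tiny reg}}$, e.g.\ $q_{\text{\tiny reg}}(x)\gtrsim T|\phi_{\text{\tiny reg}}(x)|^{2\Dmax}$ in the polynomial case, giving $V(u)\lesssim T^{-1}(1+u)^{2\Dmax\beta+1}$, which can be much larger than $u/T$. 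The incorrect inequality is then invoked twice more (``the source of the parametric floor'' and ``everything is at least $T^{-1}$ by the bound $V(u)\le u/T$''), and in both places the logic flips between lower and upper bounds. The correct reading is: $V(u)\ge u/T$ makes the variance contribution bounded \emph{below} by roughly $T^{-1}$, which is why the rate cannot beat $T^{-1}$ and is therefore written as $(\cdot)\vee T^{-1}$; the matching upper bound $\lesssim T^{-1}$ when the bias term is smaller still needs the genuine upper bound on $V$ just described.

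A second, milder gap: reducing the two nested integrals to ``integrand at the endpoint times $1/h$'' is only a crude envelope and does not by itself show that the first variance integral $\int_{|u|\le 1/h}|\phi_{\text{\tiny reg}}(u)|^2\Lambda(u)V(u)\,\d u$ is dominated by the bias-remainder balance encoded in \eqref{hdef_1}--\eqref{hdef_2}. That integral is $\lesssim \Lambda(1/h)V(1/h)\int|\phi_{\text{\tiny reg}}|^2\asymp\bigl(|\phi_{\text{\tiny reg}}(1/h)|^2\bigr)^{1/k}$ at $h=h^*$, and one must still verify, class by class and for both choices of $k$, that this is $\lesssim h^{2\beta-1}\vee T^{-1}$ (respectively the exponential analogue). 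You flag this as the delicate part, which it is, but the proof as written leaves it unverified.
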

Again, the implicit definition of the smoothing parameter is not particularly intuitive so we have a look at some examples to understand how the convergence rates are connected to the maximal distance between the observation times. 

\pagebreak[3]
\vspace*{0.2cm}
\textbf{Examples.}\begin{itemize}
\item[(i)]  In the polynomial decay scenario,  \eqref{hdef_1} implies that $h^*\gtrsim h'$, with 
\begin{align}
h'=  T^{-\frac{1}{2\Dmax  \beta +1+\frac{2\beta}{k} } }, 
\end{align}
and the corresponding convergence rate is faster than or equal to $O\left( T^{-\frac{2\beta -1}{2\Dmax\beta +1 +\frac{2\beta}{k} } }\vee T^{-1}\right)$. This implies, in particular, that for $\beta >1$, without any further restriction on the regularity of the observation times, the estimator attains the parametric rate of convergence if a finite $4k$-th moment exists and $\Dmax \leq 1- \frac{1}{\beta} - \frac{1}{k}$. 
\item[(ii)] In the exponential decay scenario with $\alpha<\frac{1}{2}$,  \eqref{hdef_1} implies that  $h^*\gtrsim h'$, with 
\[
h'  = \Big( \frac{  \ln T - \ln( \ln T)^{\frac{1- \alpha}{\alpha} }    }{2c (\Dmax+\frac{1}{k}) }     \Big)^{\frac{1}{\alpha} }.
\]
The corresponding rate of convergence is faster than or equal to  $(\ln T)^{\gamma}  T^{-\frac{1}{\Dmax +\frac{1}{k} }}\vee T^{-1} )$, with $\gamma= \frac{(1-\alpha)(\Dmax 1+1/k}{(\Dmax +1/k)\alpha}$.

It follows that the parametric rate is attained whenever the $4m$-th moment is finite and $\Dmax < 1- \frac{1}{k}$. 

For exponential decay with $\alpha\geq 1/2$, the arguments are the same, apart from an additional logarithmic loss. 
 \end{itemize}
\section{Numerical examples} \label{Abschnitt_Numerik}
\subsection{Practical calculation of the weights and  data driven bandwidth selection}
The weights assigned to each of the $\hat{\phi}_j$ are crucial for the construction of the estimator and the ideal weights $w_j^*=\overline{\phi_j}$ are not feasible to actually compute.  

On may consider the following selection algorithm for the weights. The interval $[0,\Dmax]$ is divided into $K_n$ disjoint intervals of equal length.  Set $\mathcal{J}_k=\{\ell: \Delta_\ell\in I_k\}$. Then, for $\Delta_j\in I_k$, a biased estimator of $w_j= \overline{\phi}_j$ can be constructed, setting 
\[
\hat{w}_j:= \frac{1}{|\mathcal{J}_k|}\sum_{\ell \in \mathcal{J}_k } e^{- iu Z_\ell }. 
\] 
It can be shown that this estimation procedure has good theoretical properties and  it preserves, up to some logarithmic loss, the upper risk bound and convergence rates which have been derived for the oracle estimator when the number $K_n$ of sub-intervals is logarithmic in $n$. However, in numerical examples, the practical performance of the estimator turns out to be unsatisfactory. For this reason, we cannot recommend this procedure in applications. 

Instead, we propose an iterative selection algorithm for the weights.  A preliminary estimator $\hat{\Psi}$ of $\Psi$ is calculated, using the initial weights $w_j\equiv 1$.  Improved  estimators of the weights are then given by 
$\hat{w}_j = \overline{\exp(\Delta_j\hat{\Psi} ) }$. These improved estimators are then applied to build a new estimator of $\Psi$. The procedure  is iterated until the  $\lk^2([-\sqrt{T},\sqrt{T}])$-distance between the empirical weights is sufficiently small.  

The algorithm can be summarized as follows. 

\begin{align}
&\forall j: \hat{w}_{j,0}\leftarrow 0 \\
&  \forall j: \hat{w}_{j,1}\leftarrow 1  \\ 
& m \leftarrow 1 \\
& \text{while} ( \exists  j:\,   \| \hat{w}_{j,m-1}- \hat{w}_{j,m} \|^2_{\lk^2([-\sqrt{T},\sqrt{T}]) }
> 1/T )  \\
& \{  \\
& \forall j:\, w_j\leftarrow \hat{w}_{j,m}   \\
&\hat{\Psi}(u)\leftarrow  \int_0^u  \hat{p}(z)/\tilde{q}(z) \d z    \\
&\forall j:  \hat{w}_{j,m+1}  \leftarrow   \overline{\exp(\Delta_j \hat{\Psi} ) }  \\
&m\leftarrow m+1\\
&\} \\
& return(w_1, \cdots, w_n) 
\end{align}

\subsection{Estimation of the jump measure}
We consider the particular case where the kernel is specified to be the sinc-kernel and $g$ is estimated with $\lk^2(\R)$-loss.    We consider the collection of cutoff parameters $\mathcal{M}=
\{m  \in \N:   m\leq \sqrt{T} \}$.  The data driven cutoff parameter $\hat{m}$ is calculated, using a leave-p-out cross validation strategy:  Given any subset $P=\{n_1,..,n_p\}\subseteq  \{1,\cdots, n\}$ with $p$ elements,  we calculate the estimator $( \hat{\Psi}' )^{(P)}$ of $\Psi'$, based on the data set 
$\{Z_j, \, j\in P \}$ as well as the estimator  $(\hat{\Psi}')^{(-P)}$ based on $\{Z_{j}, \ j\not \in P\}$.  In the definition of 
$(\hat{\Psi}')^{(P)}$ we omit the regularization of $\hat{q}$ in the denominator.   

By the oracle cutoff parameter $m^*$, we understand the minimizer of the $\lk^2$-loss, 
\[
m^*=\argmin_{m\in \mathcal{M} } \limits \int |\hat{g}_m(x) - \hat{g}(x) |^2  \d x.
\]
By the Plancherel formula, 
\begin{align}
m^* = \argmin_{m\in \mathcal{M} }  \limits  \int_{-m}^{m} |\hat{\Psi}'(u)|^2 \d u - 2 \Re \int_{-m}^{m}   \hat{\Psi}'(u) \overline{\Psi'(u)} \d u 
=:\ell( g, \hat{g}_m ).
\end{align}
Since  this  true loss function is not feasible to compute,  $\ell(g, \hat{g})$ is approximated by the empirically accessible quantity 
\[
\hat{\ell}( g, \hat{g}_{m} ): = \int_{-m}^{m} |\hat{\Psi}'(u)|^2 \d u - 2   {n\choose p}^{-1} \sum_{P\subseteq \{1,...,n\}  }\Re 
\int_{-m}^{m} ( \hat{\Psi}') ^{(P)}(u) \overline{ (\hat{\Psi}')^{(\m P)} (u) }   \d u,
\]
and the data driven cutoff parameter is defined to be 
\[
\hat{m}=  \inf_{m\in \mathcal{M} } \hat{\ell} (g, \hat{g}_m).  
\]

We work with the choice  $p=n/10$. The constant threshold value $\kappa$ is set equal to $1$. However, when considering all subsets of size $p$, the numerical complexity of the algorithm explodes. In praxis,  we do not consider all subsets, but content ourselves with splitting the data into $n/100$ disjoint blocks $B_1, \cdots, B_{n/100}$ of equal size and calculate the leave-p-out estimators based on all subsets of the form $P=B_{j}\cup B_{j+1} \cup ... \cup B_{j+9},\ { j=1, \cdots,  {
n/10 - 9}}$. 

In the definition of the estimator, the distances between the observations may be fairly arbitrary and no particular structural assumptions are imposed, despite of the fact that the distances are bounded from above.   We calculate, in a preliminary step, distances between the observations which are independent realizations of a $\mathcal{U}[u,t]$-distribution. We consider the cases where $t=6$ or $t=2$.

The cross validated estimator is compared to the "estimator" with oracle choice of the weights and oracle cutoff of the bandwidth.  Moreover, in order to  illustrate the relevance of the proper choice of the weights, we give, for comparison, the "estimator" where all weights are set equal to 1. Again, we use in this case the oracle choice of the bandwidth.  

The following examples are being considered. 
\begin{itemize}
\item[(i)] $X$ is a Gamma-process with parameters $3$ and $2$, so $X_1\sim \Gamma(3,2)$. 

\item[(ii)] $X$ is a compound Poissson process with intensity parameter $3$ and standard normally distributed jumps. 
\item[(iii)] $X$ is a symmetric bilateral Gamma process with parameters $2$ and $4$, so $X_1$ is the difference between two independent $\Gamma(2,4)$-distributed random variables. 
\end{itemize}

Based on $500$ independent realizations,  we calculate the empirical risk $r_{or}$ of the "estimator" with oracle weights and oracle bandwidth, the empirical risk $r_{ad}$ of the estimator with adaptive bandwidth selection and data driven choice of the weights, as well as the empirical risk $r_{eq}$ corresponding the the procedure with equal weights and oracle bandwidth.

In Table 1 below, we summarize the result, for observation distances which are generated using a $\mathcal{U}([0,6])$-distribution.  In Table 2, the results are given when the observation distances are generated, using a $\mathcal{U}([0,2])$-distribution.

\begin{table}[h!!]
\renewcommand{\arraystretch}{2}
\begin{tabular}{|p{1cm}||p{1.5cm}|p{1.5cm} | p{1.5cm}||p{1.5cm}|p{1.5cm}|p{1.5cm}|}
\hline 
\   & \multicolumn{3}{c||}{$\Gamma(3,2)$} &  \multicolumn{3}{c|}{Cpois($\mathcal{N}(0,1),3)$} \\
\hline 
n  & $r_{or}$  & $r_{ad }$  & $r_{eq}$   & $r_{or}$  & $ r_{ad}$   & $r_{eq}$  \\
\hline 
1000 & 0.5152 & 1.1669  &    1.3790   & 0.1663  &0.3330&  0.7350 \\
\hline 
5000    &  0.2936 & 0.5350 &  1.2297
  &  0.0495  &  0.0981  & 0.4845 \\
\hline 
10000    & 0.2097  & 0.3456&   1.1378
   &  0.0268 &  0.0529  & 0.3828
\\
\hline 
\hline
\   & \multicolumn{3}{c||}{$b\Gamma(2,4,2,4)$} \\
\cline{1-4 }
n   & $r_{or}$  & $r_{ad }$  & $r_{eq}$ \\
\cline{1-4} 
1000 & 0.2845777 & 0.4068848 & 0.6049949   \\
\cline{1-4}
5000&     0.1123  &  0.1533  &  0.5230    \\
\cline{1-4}
10000&    0.0687 &  0.0871&  0.4719 \\
\cline{1-4}
\end{tabular}
\caption{$\mathcal{U}([0,6])$-distributed observation distances   }
\end{table}
\begin{table}[h!]
\renewcommand{\arraystretch}{2}
\begin{tabular}{|p{1cm}||p{1.5cm}|p{1.5cm} | p{1.5cm}||p{1.5cm}|p{1.5cm}|p{1.5cm}|}
\hline 
\   & \multicolumn{3}{c||}{$\Gamma(3,2)$} &  \multicolumn{3}{c|}{Cpois($\mathcal{N}(0,1),3)$} \\
\hline 
n  & $r_{or}$  & $r_{ad }$  & $r_{eq}$   & $r_{or}$  & $ r_{ad}$   & $r_{eq}$  \\
\hline 
1000 &  0.3483  & 0.6648 &  0.8352   &   0.0784    & 0.1975  &  0.1937  \\
\hline 
5000 & 0.1632 &  0.2374 & 0.6637    & 0.0201  &  0.0388 &  0.0596     \\
\hline 
10000  & 0.1104 &   0.14625   &   0.5786&  0.0111 &  0.0189   & 0.0326   \\
\hline 
\hline
\   & \multicolumn{3}{c||}{$b\Gamma(2,4,2,4)$} \\
\cline{1-4 }
n   & $r_{or}$  & $r_{ad }$  & $r_{eq}$ \\
\cline{1-4} 
1000 &     0.1682   &   0.2484  &   0.3617  \\
\cline{1-4}
5000&       0.0460 &    0.0574   &  0.2314          \\
\cline{1-4}
10000&  0.0246 &  0.0269    & 0.1817   
    \\
\cline{1-4}
\end{tabular}
\caption{$\mathcal{U}([0,2])$-distributed observation distances   }
\end{table}

\subsubsection{Density estimation}
In the same way as in the preceding section, we define the 
estimators $(\hat{\Psi'})^{(P)}$ and  $(\hat{\Psi'})^{(-P)}$.   The corresponding estimators of the characteristic function are  
$\hat{\phi}^{(P)}(u):= \exp( \intu (\hat{\Psi'})^{(P)}(x) \d x ) $  and  $\hat{\phi}^{(-P)}(u):= \exp( \intu (\hat{\Psi'})^{(-P)}(x) \d x ) $.  The cutoff parameter is then defined to be 
\[
\hat{m}:= \inf_{m\in \mathcal{M} }  \int_{-m}^{m}  |\hat{\phi}(u)|^2 \d u  - 2 {n \choose p}^{-1}
\sum_{P\subseteq \{1,...,n\}   }  \int_{-m}^{m}  \Re \big( \hat{\phi}^{(P)}  (u)  \overline{  \hat{\phi}^{(-P)} (u) }\big)  \d u.
\]   
Again, we specify $p=n/10$ and consider, in praxis, only a part of the subsets of size $p$. 

The cross-validated estimator with empirical weights is once more compared to the estimator with oracle choice of the weights and of the cutoff parameter, as well as to the estimator where all weights are identical 1. 

We consider the following examples: 
\begin{itemize}
\item[(i)] Gamma-process with parameters $3$ and $2$.
\item[(ii)] Bilateral gamma-process with parameters $2$ and $4$. 
\item[(iii)] Brownian motion with variance 1 and drift 2. 
\end{itemize}
The results are summarized in Table 3 and Table 4 below. 
\begin{table}[h!]
\renewcommand{\arraystretch}{2}
\begin{tabular}{|p{1cm}||p{1.5cm}|p{1.5cm} | p{1.5cm}||p{1.5cm}|p{1.5cm}|p{1.5cm}|}
\hline 
\   & \multicolumn{3}{c||}{$\Gamma(3,2)$} &  \multicolumn{3}{c|}{$b\Gamma(2,4,2,4)$} \\
\hline 
n  & $r_{or}$  & $r_{ad }$  & $r_{eq}$   & $r_{or}$  & $ r_{ad}$   & $r_{eq}$  \\
\hline 
1000 & 0.00319   &  0.00418  &    0.06389  &   0.00369  &  0.00460   &    0.01777   \\ 
\hline 
5000  &  0.00095  &  0.00099  &  0.03510   &    0.00083
  &   0.00088 &    0.00760   \\
\hline 
10000   & 0.00060    &   0.00061 &  0.02889   &  0.00041      &0.00041    &0.00501       \\
\hline 
\hline
\   & \multicolumn{3}{c||}{$\mathcal{N}(2,1)$ } \\
\cline{1-4 }
n   & $r_{or}$  & $r_{ad }$  & $r_{eq}$ \\
\cline{1-4} 
1000 & 0.00140
& 0.00288 &    0.03632     \\
\cline{1-4}
5000 &0.00065  &  0.00065&           0.01972   \\
\cline{1-4}
10000& 0.00055&  0.00056 &   0.01547            \\

\cline{1-4}
\end{tabular}
\caption{Estimation of the distributional density, with $\mathcal{U}([0,6])$-distributed observation distances   }
\end{table}
\begin{table}[h!!!!]
\renewcommand{\arraystretch}{2}
\begin{tabular}{|p{1cm}||p{1.5cm}|p{1.5cm} | p{1.5cm}||p{1.5cm}|p{1.5cm}|p{1.5cm}|}
\hline 
\   & \multicolumn{3}{c||}{$\Gamma(3,2)$} &  \multicolumn{3}{c|}{$b\Gamma(2,4,2,4)$} \\
\hline 
n  & $r_{or}$  & $r_{ad }$  & $r_{eq}$   & $r_{or}$  & $ r_{ad}$   & $r_{eq}$  \\
\hline 
1000 & 0.00168   &  0.00178 & 0.00786 &    0.00185    &    0.00195      &   0.00305      \\ 
\hline 
5000  &0.00057  & 0.00057&  0.00205   &   0.00035   & 0.00036   &  0.00074      \\
\hline 
10000   & 0.00043 & 0.00044   &0.00129    &  0.00018    & 0.00018   &    0.00040   \\
\hline 
\hline
\   & \multicolumn{3}{c||}{$\mathcal{N}(2,1)$ } \\
\cline{1-4 }
n   & $r_{or}$  & $r_{ad }$  & $r_{eq}$ \\
\cline{1-4} 
1000 & 0.00089&0.00090   &    0.00419     \\ 
\cline{1-4}
5000 &  0.00056&    0.00057  &        0.00217     \\
\cline{1-4}
10000&0.00052 &  0.00052 &      0.01533         \\  
\cline{1-4}
\end{tabular}
\caption{Estimation of the distributional density, with $\mathcal{U}([0,2])$-distributed observation distances   }
\end{table}

\textbf{Discussion:} It is surprising to realize that for medium to large sample sizes, there is no visible difference in the performance of the oracle estimator and the estimator with data driven choice of the bandwidth and the weights, whereas the procedure with all weights set equal to $1$ performs substantially worse. 
\section{Proofs}\label{Abschnitt_Beweise}
\subsection{Preliminaries}
\begin{lemma} \label{Lemma_Momente} For arbitrary $m \in \N$, there exists a positive constant $C$ depending on $m$ such that 
\begin{equation}
\ebig{\abs{\hat{q}(u) - q(u)    } ^{2m} }  \leq C 
\big( \sum_{j=1}^{n} \Delta_j^2 |w_j(u)|^2   \big)^m = C \sigma(u)^{2m}.
\end{equation}
\end{lemma}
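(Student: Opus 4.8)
Since $\hat q(u)$ is, as noted above, unbiased for $q(u)$, the natural move is to write $\hat q(u)-q(u)=\sum_{j=1}^n\xi_j$ with $\xi_j:=\Delta_j w_j(u)\bigl(e^{iuZ_j}-\phi_{\Delta_j}(u)\bigr)$ and to expand the $2m$-th power. First I would collect the elementary facts about the summands: the increments $Z_1,\dots,Z_n$ of the L\'evy process over the disjoint intervals $(t_{j-1},t_j]$ are independent, hence so are the $\xi_j$; each $\xi_j$ is centred, $\mathbb{E}\xi_j=0$; and $|\xi_j|\le 2\Delta_j|w_j(u)|$ almost surely, while $\mathbb{E}|\xi_j|^2=\Delta_j^2|w_j(u)|^2\bigl(1-|\phi_{\Delta_j}(u)|^2\bigr)\le\Delta_j^2|w_j(u)|^2$. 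Consequently $\mathbb{E}|\xi_j|^a\le\bigl(2\Delta_j|w_j(u)|\bigr)^{a-2}\mathbb{E}|\xi_j|^2\le 2^{a-2}\bigl(\Delta_j^2|w_j(u)|^2\bigr)^{a/2}$ for every $a\ge2$.

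Then I would write, using $|z|^{2m}=z^m\bar z^{\,m}$,
\[
\mathbb{E}\bigl|\hat q(u)-q(u)\bigr|^{2m}=\mathbb{E}\Bigl[\Bigl(\sum_{j}\xi_j\Bigr)^m\Bigl(\sum_{k}\overline{\xi_k}\Bigr)^m\Bigr]=\sum_{(j_1,\dots,j_m,k_1,\dots,k_m)}\mathbb{E}\bigl[\xi_{j_1}\cdots\xi_{j_m}\,\overline{\xi_{k_1}}\cdots\overline{\xi_{k_m}}\bigr],
\]
the outer sum running over all of $\{1,\dots,n\}^{2m}$. By independence and $\mathbb{E}\xi_j=\mathbb{E}\overline{\xi_j}=0$, every term in which some index value appears exactly once among the $2m$ slots vanishes; only terms in which each value present appears at least twice survive. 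A surviving term involves at most $r\le m$ distinct values, say with multiplicities $a_1,\dots,a_r\ge2$ and $a_1+\dots+a_r=2m$, and by independence across distinct values its modulus is at most $\prod_{i=1}^r\mathbb{E}|\xi_{j_i}|^{a_i}\le 2^{2m}\prod_{i=1}^r\bigl(\Delta_{j_i}^2|w_{j_i}(u)|^2\bigr)^{a_i/2}$.

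Finally I would sum these bounds. Fixing a combinatorial shape (a partition of the $2m$ slots into blocks of sizes $a_1,\dots,a_r$) and summing over all assignments of distinct values $j_1,\dots,j_r$ is bounded by $\prod_{i=1}^r\sum_{j}\bigl(\Delta_j^2|w_j(u)|^2\bigr)^{a_i/2}$; since $a_i/2\ge1$, superadditivity of $t\mapsto t^{a_i/2}$ on $[0,\infty)$ gives $\sum_{j}\bigl(\Delta_j^2|w_j(u)|^2\bigr)^{a_i/2}\le\bigl(\sum_{j}\Delta_j^2|w_j(u)|^2\bigr)^{a_i/2}=\sigma(u)^{a_i}$, so each shape contributes at most $2^{2m}\prod_i\sigma(u)^{a_i}=2^{2m}\sigma(u)^{2m}$. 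The number of shapes depends only on $m$, so summing over them and absorbing all factors into a single constant $C=C(m)$ yields the claim.

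The argument is routine; the only point needing care is the bookkeeping of the surviving terms, and the single genuinely structural input — the reason the bound is $\sigma(u)^{2m}$ and not $\bigl(\sum_j\Delta_j|w_j(u)|\bigr)^{2m}$ — is that independence and centering force every multi-index to be covered at least twice, which is exactly what lets each surviving sum be controlled by a power of $\sum_j\Delta_j^2|w_j(u)|^2=\sigma(u)^2$. Equivalently, one may split $\hat q(u)-q(u)$ into its real and imaginary parts, apply Rosenthal's inequality to each, and bound its two terms by $\sigma(u)^{2m}$ in exactly the same way.
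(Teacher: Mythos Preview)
Your proof is correct. The paper's own argument is considerably shorter: it simply invokes Rosenthal's inequality for the independent, centred summands $\xi_j=\Delta_j w_j(u)(\hat\phi_j(u)-\phi_j(u))$, obtaining
\[
\mathbb{E}\bigl|\hat q(u)-q(u)\bigr|^{2m}\le C'\max\Bigl\{\Bigl(\sum_j\Delta_j^2|w_j|^2\,\mathbb{E}|\hat\phi_j-\phi_j|^2\Bigr)^m,\ \sum_j\Delta_j^{2m}|w_j|^{2m}\,\mathbb{E}|\hat\phi_j-\phi_j|^{2m}\Bigr\},
\]
and then uses $|\hat\phi_j-\phi_j|\le2$ to bound both branches by $C\sigma(u)^{2m}$. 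Your combinatorial expansion is, in effect, a self-contained proof of the Rosenthal bound in this particular bounded-summand setting: the ``each index appears at least twice'' observation is exactly what produces the two Rosenthal terms, and your superadditivity step is what collapses the second term into the first. So the two arguments are equivalent in spirit; yours is more elementary and does not require citing an external inequality, while the paper's is terser. You already note this equivalence in your final sentence.
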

\begin{proof} 
By the   Rosenthal inequality (see, e.g. \cite{Ibragimov_Ros}) and the fact that $|\hat{\phi}_j-\phi_j|$  is bounded by $2$, there exist constants $C'$ and $C$ depending on $m$ such that 
\begin{align}
\notag \phantom{\leq}&  \ebig{\abs{ \hat{q}(u) - q(u)    } ^{2m} }    \\
\leq &  C' \max\Big\{   \bigg(\sum_{j=1}^{n} \Delta_j^2 |w_j|^2 
\ebig{ \big|\hat{\phi}_j (u) - \phi_j(u) \big|^2  }  \bigg)^{m} , \sum_{j=1}^{n} \Delta_j^{2m} |w_j|^{2m} 
\ebig{ \big|\hat{\phi}_j (u) - \phi_j(u) \big|^{2m}  }   \Big\}  \\
\leq &  C \bigg(\sum_{j=1}^{n} \Delta_j^2 |w_j|^2 
  \bigg)^{m}  = C \sigma(u)^{2m}.
\end{align}
This is the desired result. 
\end{proof}
\begin{lemma}\label{Lemma_Neumann} For arbitrary $m\in \N$,  there exists a constant $C$ depending on $\kappa$, $\Dmax$  and $m$ such that 
\begin{align}
\ebigg{\bigg| \frac{1}{q(u)}- \frac{1}{\tilde{q}(u) } \bigg|^m   }      \leq C \min \bigg\{ \frac{1}{|q(u)|^m}, \frac{1}{|q(u)|^{\frac{3m}{2} }  } \bigg\}               
\end{align}

\end{lemma}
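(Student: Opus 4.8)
The plan is to adapt the thresholding device of \cite{Neumann}. I abbreviate $\tau(u):=\max\{\sigma(u),\kappa\}$ and, recalling the definition of $1/\tilde q(u)$, split according to whether the threshold is active:
\[
\Big|\tfrac{1}{q(u)}-\tfrac{1}{\tilde q(u)}\Big|^m = \tfrac{1}{|q(u)|^m}\,\mathds{1}\{|\hat q(u)|<\tau(u)\} + \Big|\tfrac{1}{q(u)}-\tfrac{1}{\hat q(u)}\Big|^m\mathds{1}\{|\hat q(u)|\geq\tau(u)\}.
\]
On the second event one has $|\hat q(u)|\geq\sigma(u)$, so $|1/q(u)-1/\hat q(u)|=|\hat q(u)-q(u)|/(|q(u)|\,|\hat q(u)|)\leq|\hat q(u)-q(u)|/(|q(u)|\sigma(u))$; taking expectations, estimating $\ebig{|\hat q(u)-q(u)|^m}\leq(\ebig{|\hat q(u)-q(u)|^{2m}})^{1/2}\leq C^{1/2}\sigma(u)^m$ via Jensen and Lemma~\ref{Lemma_Momente}, and bounding the indicator in the first term crudely by $1$, I obtain the unconditional estimate $\ebig{|1/q(u)-1/\tilde q(u)|^m}\leq C_1|q(u)|^{-m}$. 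This already settles the claimed inequality whenever $|q(u)|\leq 1$, since then the minimum equals $|q(u)|^{-m}$.

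Next I would exploit the structural fact that, for the oracle weights, $q(u)=\sum_{j=1}^n\Delta_j|\phi_{\Delta_j}(u)|^2$ is nonnegative and $\sigma(u)^2=\sum_{j=1}^n\Delta_j^2|\phi_{\Delta_j}(u)|^2\leq\DMax\sum_{j=1}^n\Delta_j|\phi_{\Delta_j}(u)|^2=\DMax\,q(u)$, so $\sigma(u)^m\leq\DMax^{m/2}q(u)^{m/2}$. In view of the previous step it remains to show $\ebig{|1/q(u)-1/\tilde q(u)|^m}\leq C|q(u)|^{-3m/2}$ on $\{|q(u)|\geq 1\}$. For $1\leq|q(u)|\leq R_0:=\max\{4\DMax,2\kappa\}$ this is immediate from the previous bound, since $C_1|q(u)|^{-m}\leq C_1 R_0^{m/2}|q(u)|^{-3m/2}$. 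In the remaining regime $|q(u)|>R_0$ one has $\sigma(u)\leq\sqrt{\DMax|q(u)|}\leq|q(u)|/2$ and $\kappa<|q(u)|/2$, hence $\tau(u)\leq|q(u)|/2$ and $\{|\hat q(u)|<\tau(u)\}\subseteq\{|\hat q(u)-q(u)|>|q(u)|/2\}$. The first term is then controlled by Markov's inequality of order $2m$ together with Lemma~\ref{Lemma_Momente} and $\sigma(u)^2\leq\DMax|q(u)|$, yielding $|q(u)|^{-m}\PP(|\hat q(u)|<\tau(u))\leq C'|q(u)|^{-2m}\leq C'|q(u)|^{-3m/2}$. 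For the second term I split once more: on $\{|\hat q(u)|\geq|q(u)|/2\}$ I use $|1/q(u)-1/\hat q(u)|\leq 2|\hat q(u)-q(u)|/|q(u)|^2$ together with $\ebig{|\hat q(u)-q(u)|^m}\leq C^{1/2}\sigma(u)^m$; on $\{\tau(u)\leq|\hat q(u)|<|q(u)|/2\}$ I use $|\hat q(u)|\geq\sigma(u)$ and the observation that this event forces $|\hat q(u)-q(u)|>|q(u)|/2$, estimating $\ebig{|\hat q(u)-q(u)|^m\mathds{1}\{|\hat q(u)-q(u)|>|q(u)|/2\}}$ by the Cauchy--Schwarz and Markov inequalities. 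In both sub-cases one lands at a bound of the form $C\sigma(u)^m|q(u)|^{-2m}\leq C\DMax^{m/2}|q(u)|^{-3m/2}$. Collecting the three estimates finishes the argument.

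The routine part is the bookkeeping of the two thresholds $\sigma(u)$ and $\kappa$ and of the auxiliary level $|q(u)|/2$; the one genuinely structural ingredient—and the reason the gain is to the exponent $3m/2$ rather than merely $m$—is the inequality $\sigma(u)^2\leq\DMax\,q(u)$ combined with the moment bound of Lemma~\ref{Lemma_Momente}. I expect the only slightly delicate point to be the separate treatment of the ``bounded $q$'' regime $1\leq|q(u)|\leq R_0$, where one simply recycles the cruder estimate from the first step.
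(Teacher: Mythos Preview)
Your argument is correct and rests on the same two ingredients as the paper's proof: the moment bound of Lemma~\ref{Lemma_Momente} and the structural inequality $\sigma(u)^2\leq\Dmax\,q(u)$ for the oracle weights. The paper organizes the case distinction differently---splitting first according to whether $|q(u)|\leq 2\sigma(u)$ or $|q(u)|>2\sigma(u)$, and in the latter case according to whether $\sigma(u)\geq\kappa$ or $\sigma(u)<\kappa$ (with a further sub-split on $|q(u)|\lessgtr 2\kappa$)---whereas you first establish the crude bound $C_1|q(u)|^{-m}$ unconditionally and then split according to the size of $|q(u)|$ relative to the fixed levels $1$ and $R_0=\max\{4\DMax,2\kappa\}$. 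Your organization is arguably a little cleaner, since the crude bound immediately disposes of the regime where the minimum in the statement equals $|q(u)|^{-m}$, and the choice of $R_0$ guarantees $\tau(u)\leq|q(u)|/2$ in one stroke, avoiding the paper's separate treatment of the two thresholds $\sigma(u)$ and $\kappa$. Substantively, however, the two proofs are the same.
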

\begin{proof}
Consider first the case where $|q(u)| \leq  2\sigma(u)$.  Using the fact that, by definition of $1/\tilde{q}$ and the assumption on $q$,
\[
 \frac{1}{|\tilde{q}(u)| }\leq \frac{1}{\sigma(u) }< \frac{2}{|q(u)|}, 
\]
  as well as  the estimate 
\begin{align}
  & \sigma(u) =\Big( \sum_{j=1}^{n} \Delta_j^2 |w_j(u)|^2   \Big)^{\frac{1}{2}  }  \leq \Big(\Dmax
\sum_{j=1}^{n}  \Delta_j |w_j(u)|^2   \Big)^{\frac{1}{2}  }   =   
\Big(\Dmax\sum_{j=1}^{n}  \Delta_j |\phi_j(u)|^2   \Big)^{\frac{1}{2}  } \\
 =& \Dmax^{\frac{1}{2} } q(u)^{\frac{1}{2} }, \label{Neumann_0}
\end{align}
 we find that 
\begin{align}
\ebigg{\bigg| \frac{1}{q(u)}- \frac{1}{\tilde{q}(u) } \bigg|^m   }
\leq 3^m \frac{1}{|q(u)|^m }  \leq  6^m \frac{\sigma(u)^m }{|q(u)|^{2m}  }
\leq    \frac{(6\sqrt{\Dmax})^m}{|q(u)|^{\frac{3m}{2} } }.
\end{align}  
Next, assume that $|q(u)|>2\sigma(u)$ and, in addition, $\sigma(u)\geq \kappa$.   We may then use the decomposition 
\begin{align}
\ebigg{\bigg| \frac{1}{q(u)}- \frac{1}{\tilde{q}(u) } \bigg|^m   }
= \frac{1}{|q(u)|^m }  \PP\big(\big\{|\hat{q}(u)| <\sigma(u)  \big\}\big)   + \ebigg{\bigg| \frac{1}{q(u)}- \frac{1}{\hat{q}(u) } \bigg|^m  \ind{\{ |\hat{q}(u)| \geq \sigma(u) \} } }. 
\end{align} 
By the Markov inequality, 
\begin{align}
 &  \PP\big(\big\{  |\hat{q}(u)| <\sigma(u)  \big\} \big)  \leq \PP\Big(\Big\{ |q(u) - \hat{q}(u)| > \frac{|q(u)|}{2}  \Big\} \Big)   \leq 2^m \frac{  \E[    |q(u) - \hat{q}(u)|^m       ]     }{  |q(u)|^m    }  \\
\leq & 2^m \frac{\sigma(u)^{m}  }{|q(u)|^m } \leq (2 \sqrt{\Dmax} )^m \frac{1  }{|q(u)|^{\frac{m}{2} } }.
\end{align}
On the other hand,  using Lemma \ref{Lemma_Momente}  and formula \eqref{Neumann_0}  we can estimate for a constant $C$ depending on $m$,  
\begin{align}
&  \ebigg{\bigg| \frac{1}{q(u)}- \frac{1}{\hat{q}(u) } \bigg|^m  \ind{\{ |\hat{q}(u)| \geq \sigma(u) \} } }
\leq 2^m \Big( \frac{ \E{   |q(u) - \hat{q}(u)|^m       }     }{  |q(u)|^{2m}       }
+  \frac{ \E{   |q(u) - \hat{q}(u)|^{2m}       }     }{  |q(u)|^{2m}   \sigma(u)^m       }  \Big) \\
\leq & C \frac{\sigma(u)^m }{|q(u)|^{2m}  } \leq C\min\Big\{ \frac{1}{|q(u)|^m},
\frac{\sigma(u)^m}{|q(u)|^{2m} }   \Big\}  \leq C \Big\{    \frac{1}{|q(u)|^m},
\frac{\sqrt{\Dmax}^m}{|q(u)|^{\frac{3m}{2}} }    \Big\} .   \label{Neumann_1}
\end{align}
Finally, consider the case where $|q(u)|> 2\sigma(u)$ and $\sigma(u)<\kappa$.  We can then decompose 
\begin{align}
\ebigg{\bigg| \frac{1}{q(u)}- \frac{1}{\tilde{q}(u) } \bigg|^m   }
= \frac{1}{|q(u)|^m }  \PP\big(\big\{|\hat{q}(u)| <\kappa \big\}\big)   + \ebigg{\bigg| \frac{1}{q(u)}- \frac{1}{\hat{q}(u) } \bigg|^m  \ind{\{ |\hat{q}(u)| \geq \kappa \} } }.  \label{Neumann_2}
\end{align} 
We distinguish between two different sub-cases.

a)  In addition to $|q(u)|> 2\sigma(u)$ and $\sigma(u)<\kappa$, $|q(u)|\leq  2\kappa$.  We may then estimate 
\begin{align}
\frac{1}{|q(u)|^m }  = (2\kappa)^{\frac{m}{2} } \frac{1}{|q(u)|^m (2\kappa)^{\frac{m}{2}}  }
\leq  (2\kappa)^{\frac{m}{2} } \frac{1}{|q(u)|^{\frac{3m}{2}  }  } 
\end{align}
and, moreover, 
\begin{align}
\bigg| \frac{1}{q(u)}- \frac{1}{\hat{q}(u) } \bigg|^m  \ind{\{ |\hat{q}(u)| \geq \kappa \} }
\leq\frac{3^m}{|q(u)|^m}\leq    \frac{ (3 \sqrt{2\kappa} )^m }{|q(u)|^{\frac{3m}{2}} } .
\end{align}
Along with formula \eqref{Neumann_2}, this yields the desired result for the sub-case. 

b) $|q(u)|> 2\sigma(u)$, $\sigma(u)<\kappa$ and, in addition, $|q(u)|> 2\kappa$.  In this case, another application of the Markov inequality and Lemma \ref{Lemma_Momente} gives  for a constant $C$ depending on $m$, 
\begin{align}
 \PP\big(\big\{|\hat{q}(u)| <\kappa \big\}\big)   \leq \PP\Big(\Big\{|q(u) - \hat{q}(u)| > \frac{|q(u)|}{2} \Big\}\Big)  \leq C \frac{\sigma(u)^m }{|q(u)|^{m}  }  \leq C \sqrt{\Dmax}  \frac{1}{|q(u)|^{\frac{3m}{2}}  }.  
\end{align}
Next, arguing along the same lines as in formula \eqref{Neumann_1}, we find that 
\begin{align}
& \ebigg{  \bigg| \frac{1}{q(u)}- \frac{1}{\hat{q}(u) } \bigg|^m  \ind{\{ |\hat{q}(u)| \geq \kappa \} }   }
\leq C  \Big(   \frac{\sigma(u)^m }{|q(u)|^{2m}  } + \frac{\sigma(u)^{2m}  }{  |q(u)|^{2m}  \kappa^m } \Big)  \leq  C \frac{\sigma(u)^{m}  }{|q(u)|^{2m} }   \\
\leq  & C  \min\Big\{  \frac{2^m}{|q(u)|^{m} },    \frac{2^{m+1}  \Dmax^{\frac{m}{2} } }{|q(u)|^{\frac{3m}{2} } } \Big\}.
\end{align}
This completes the proof.
\end{proof}

Recall that $X_+$ and $X_{-}$ are  independent  L\'evy processes without drift and with jump measures  $\nu_+(\d x)=\nu |_{(0,\infty)}(\d x)$ and $\nu_{-}(\d x)= \nu|_{(-\infty,0)}(\d (\m x) )$

\begin{lemma}  \label{Lemma_Ableitungen} For arbitrary $\Delta\geq 0$  and $m\in \N$,   
\begin{align}
\E{ |X_\Delta|^m }  \leq    \max\{\Delta, \Delta^m\} \big(  \E{(X_+)_{1}^m }+
\E{(X_{-})_{1}^m  }    \big)   = \max\{\Delta, \Delta^m\} C^{\pm}_{m}
\end{align}

\end{lemma}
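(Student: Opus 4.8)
The plan is to reduce the estimate to a moment bound for a single driftless subordinator, the key input being that the moments of a subordinator at time $t$ are polynomials in $t$ with non-negative coefficients.

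\textbf{Step 1 (reduction to subordinators).} If $C^{\pm}_{m}=\infty$ there is nothing to prove, so assume $C^{\pm}_{m}<\infty$. By the L\'evy--It\^o decomposition the process splits as $X_t=(X_+)_t-(X_-)_t$, where $X_+$ and $X_-$ are the independent driftless subordinators carrying the positive and the negative jumps of $X$, i.e.\ with L\'evy measures $\nu_+$ and $\nu_-$. Both $(X_+)_t$ and $(X_-)_t$ are non-negative, and $|a-b|\le\max\{a,b\}\le (a^m+b^m)^{1/m}$ for $a,b\ge 0$, so pointwise $|X_\Delta|^m\le (X_+)_\Delta^m+(X_-)_\Delta^m$ and hence
\[
\E{|X_\Delta|^m}\le \E{(X_+)_\Delta^m}+\E{(X_-)_\Delta^m}.
\]
It therefore suffices to show that every driftless subordinator $S$ with $\E{S_1^m}<\infty$ obeys $\E{S_\Delta^m}\le\max\{\Delta,\Delta^m\}\,\E{S_1^m}$; applying this to $S=X_+$ and $S=X_-$ and adding then gives the assertion.

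\textbf{Step 2 (polynomial structure of $t\mapsto\E{S_t^m}$).} Write $\rho$ for the L\'evy measure of $S$ and set $c_j:=\int_0^\infty y^j\,\rho(\d y)$; since $\int_0^1 y\,\rho(\d y)<\infty$ and $\E{S_1^m}<\infty$ forces $\int_1^\infty y^m\,\rho(\d y)<\infty$, we get $c_j<\infty$ for $1\le j\le m$. The generator of $S$ on powers is $x^k\mapsto\int_0^\infty\big((x+y)^k-x^k\big)\,\rho(\d y)=\sum_{j=1}^{k}\binom{k}{j}c_j\,x^{k-j}$, so Dynkin's formula (valid for these polynomially growing test functions by a standard truncation argument using the finite moments) yields
\[
\frac{\d}{\d t}\,\E{S_t^k}=\sum_{j=1}^{k}\binom{k}{j}c_j\,\E{S_t^{k-j}},\qquad \E{S_0^k}=0.
\]
An induction on $k$ now shows that $t\mapsto\E{S_t^k}$ is a polynomial in $t$ with vanishing constant term and non-negative coefficients; in particular $\E{S_t^m}=\sum_{k=1}^{m}a_k t^k$ with $a_k\ge 0$ and $\sum_{k=1}^{m}a_k=\E{S_1^m}$.

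\textbf{Step 3 (conclusion; main obstacle).} Because $k\mapsto\Delta^k$ is monotone on $\{1,\dots,m\}$, one has $\Delta^k\le\max\{\Delta,\Delta^m\}$ for every $1\le k\le m$, whence
\[
\E{S_\Delta^m}=\sum_{k=1}^{m}a_k\Delta^k\le\max\{\Delta,\Delta^m\}\sum_{k=1}^{m}a_k=\max\{\Delta,\Delta^m\}\,\E{S_1^m}.
\]
This is the reduced statement, and the lemma follows. Everything here is elementary apart from Step 2, where the only genuinely delicate point is justifying the moment recursion (equivalently, the non-negativity of the coefficients $a_k$); if one prefers to bypass the generator, the same polynomial structure can be read off by differentiating $\E{e^{-\lambda S_t}}=\exp\!\big(-t\int_0^\infty(1-e^{-\lambda y})\,\rho(\d y)\big)$ $m$ times at $\lambda=0$ with Fa\`a di Bruno's formula and checking that the signs in each partition term combine to give a non-negative contribution.
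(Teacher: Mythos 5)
Your proof is correct, and Steps 1 and 3 (the decomposition $X_\Delta\stackrel{d}{=}(X_+)_\Delta-(X_-)_\Delta$, the bound $|a-b|^m\le a^m+b^m$ for $a,b\ge0$, and the final observation $\Delta^k\le\max\{\Delta,\Delta^m\}$ for $1\le k\le m$) match the paper's argument. The difference lies in Step 2: the paper establishes the polynomial structure of $\Delta\mapsto\E{(X_+)_\Delta^m}$ directly, by applying Fa\`a di Bruno's formula to the $m$-th derivative of $\phi_{(X_+)_\Delta}(u)=\exp(\Delta\Psi_+(u))$ at $u=0$ and observing that every term $c_\alpha\prod_\ell(\Delta\int x^\ell\nu_+(\d x))^{\alpha_\ell}$ carries a non-negative coefficient because $\nu_+$ is supported on $(0,\infty)$. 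You instead derive a moment ODE $\frac{\d}{\d t}\E{S_t^k}=\sum_{j=1}^k\binom{k}{j}c_j\E{S_t^{k-j}}$ via Dynkin's formula and induct. Your route is arguably more transparent for seeing why the coefficients are non-negative, but it requires justifying Dynkin's formula for unbounded polynomial test functions — the truncation argument you allude to but do not carry out. The paper's route sidesteps this localization issue entirely by differentiating the characteristic function, at the cost of having to book-keep the combinatorics of Fa\`a di Bruno; you correctly identify this as the alternative in your closing remark, so you effectively have both proofs in hand.
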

\begin{proof} Since $X_\Delta$ has the same distribution as $(X_+)_\Delta - (X_-)_\Delta$, we have 
\begin{equation}
\E{ |X_\Delta|^m  }=\E{ |(X_+)_{\Delta} - (X_{-})_{\Delta} |^m }  \leq \E{ \max \{ (X_+)_{\Delta}^m , (X_{-})_{\Delta}^m \} }\leq \E{(X_+)_{\Delta}^m}  +\E{(X_{-})_{\Delta}^m}.
\label{dec pos neg}
\end{equation}
Let $\mathcal{A}:=\{\alpha \in \{1,...,m\}^m:   \sum_{\ell=1}^{m}  \ell \alpha_\ell =m \}$.
By Fa\`a di Bruno's theorem, there exist universal constants $c_\alpha, \alpha\in \mathcal{A}$ such that 
\begin{align}
\E{(X_+)_{\Delta}^m }  = i^{\m m}  \phi_{{(X_+)_{\Delta}}}^{(m)} (0)   = i^{\m m}  \sum_{\alpha  \in \mathcal{A}}
c_\alpha   \prod_{\ell =1}^{m}   \big( \Delta \Psi^{(\ell)}(0)\big)^{ {\alpha_{\ell} }  }
=  \sum_{\alpha  \in \mathcal{A}}
c_\alpha   \prod_{\ell =1}^{m}   \Big(\Delta \int  x^{\ell}\nu_+(\d x)\Big)^{ {\alpha_{\ell} }  },.
\end{align}
Since  $\nu_+$ is supported on the positive half axes, this implies 
\begin{align}
\E{ (X_+)_{\Delta}^m }   \leq  \max\{\Delta, \Delta^m\} \sum_{\alpha  \in \mathcal{A}}
c_\alpha   \prod_{\ell =1}^{m}   \Big( \int  x^{\ell}\nu_+(\d x)\Big)^{ {\alpha_{\ell} }  }
=\max\{\Delta, \Delta^m\}   \E{(X_+)_1^{m}  }.
\end{align}
Arguing along the same lines gives $\E{ (X_{-})^m_{\Delta} }\leq \max\{\Delta, \Delta^m\}
\E{(X_{-})_1^{m}  }$. Combining this with formula \eqref{dec pos neg}, gives the statement of the Lemma. 
\end{proof}
\begin{lemma} \label{Lemma_p}For any $m\in \N$, there exists a constant $C$ depending on $m$, $\DMax$ and $C_{2m}^{\pm}$  such that 
\begin{align}
& \Ebig{  \big|\hat{p}_n(u) - p(u) \big|^{2m}  }
\leq C\max \Big\{|q(u)|^m, |q(u)|  \Big\}.
\end{align}

\end{lemma}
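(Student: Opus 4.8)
The statement to prove is Lemma~\ref{Lemma_p}: a Rosenthal-type bound for the centered moments of $\hat p_n(u) - p(u)$. The structure mirrors the proof of Lemma~\ref{Lemma_Momente} for $\hat q$, but with the crucial difference that the summands now carry the factor $iZ_j e^{iuZ_j}$ rather than merely $e^{iuZ_j}$, so they are \emph{unbounded} random variables. Recall $\hat p_n(u) = \sum_{j=1}^n w_j(u)\, i Z_j e^{iuZ_j}$ and $p(u) = \sum_{j=1}^n w_j(u)\,\phi'_{\Delta_j}(u)$, so $\hat p_n(u) - p(u) = \sum_{j=1}^n w_j(u)\,\xi_j$ with $\xi_j := iZ_j e^{iuZ_j} - \mathbb E[iZ_j e^{iuZ_j}]$ independent and centered. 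First I would apply the Rosenthal inequality to this sum: for some constant $C'$ depending on $m$,
\begin{align}
\mathbb E\big[|\hat p_n(u) - p(u)|^{2m}\big] \le C'\max\Big\{ \Big(\sum_{j=1}^n |w_j(u)|^2\, \mathbb E[|\xi_j|^2]\Big)^m,\ \sum_{j=1}^n |w_j(u)|^{2m}\, \mathbb E[|\xi_j|^{2m}]\Big\}.
\end{align}

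\textbf{Controlling the moments of $\xi_j$.} The next step is to bound $\mathbb E[|\xi_j|^{2k}]$ for $k=1$ and $k=m$. Since $|\xi_j| \le 2|Z_j| = 2|X_{\Delta_j}|$ (the centering term has modulus at most $\mathbb E|Z_j|$), we get $\mathbb E[|\xi_j|^{2k}] \le 2^{2k}\,\mathbb E[|X_{\Delta_j}|^{2k}]$. Now invoke Lemma~\ref{Lemma_Ableitungen}: $\mathbb E[|X_{\Delta_j}|^{2k}] \le \max\{\Delta_j, \Delta_j^{2k}\}\, C^{\pm}_{2k} \le \DMax^{2k-1}\,\Delta_j\, C^{\pm}_{2k}$, using $\Delta_j \le \DMax$ and $\DMax \ge 1$ to pull out all but one power of $\Delta_j$. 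The key point is that exactly one factor $\Delta_j$ survives, which is what will let me match the $q(u) = \sum_j \Delta_j|\phi_j(u)|^2$ appearing on the right-hand side. Here I need the fourth-moment-type hypothesis: for the $k=m$ term I need $C^{\pm}_{2m} < \infty$, which is exactly the assumption in the lemma statement.

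\textbf{Assembling the bound.} Plugging back in: the "square-bracket" term becomes $\big(\sum_j |w_j(u)|^2\, 4\DMax^{2m-1}\Delta_j C^{\pm}_{2m}\big)^m \le C''\big(\sum_j \Delta_j |w_j(u)|^2\big)^m$. Here is the place where I use that we are working with the oracle weights $w_j = w_j^* = \overline{\phi_j(u)}$ (or, more precisely, the identity $|w_j(u)|^2 = |\phi_j(u)|^2$ that holds for the oracle weights, as already exploited in~\eqref{Neumann_0}), so $\sum_j \Delta_j |w_j(u)|^2 = \sum_j \Delta_j |\phi_j(u)|^2 = q(u)$; hence this term is $\le C'' |q(u)|^m$. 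For the "sum" term, $\sum_j |w_j(u)|^{2m}\, 4^m \DMax^{2m\cdot 2m}\Delta_j C^{\pm}_{2m} \le C''' \sum_j \Delta_j |w_j(u)|^{2m}$; since $|w_j(u)| = |\phi_j(u)| \le 1$ we have $|w_j(u)|^{2m} \le |w_j(u)|^2$, so this is $\le C''' \sum_j \Delta_j |\phi_j(u)|^2 = C''' q(u)$. Taking the max of the two bounds gives $C\max\{|q(u)|^m, |q(u)|\}$, as claimed.

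\textbf{Main obstacle.} The only genuine subtlety is the bookkeeping that produces \emph{exactly one} power of $\Delta_j$ per summand (so that $\sum_j \Delta_j |\phi_j|^2 = q$ emerges rather than $\sum_j \Delta_j^2|\phi_j|^2 = \sigma^2$ or some other combination) and the observation that $|w_j|^{2m}\le|w_j|^2$ lets the second Rosenthal term collapse to a single power of $q$ rather than $q^m$. Both are routine once one keeps careful track of the normalization; the unboundedness of $Z_j$ is handled cleanly by the moment bound of Lemma~\ref{Lemma_Ableitungen}, so there is no real difficulty there. I would also remark in passing that $|q(u)| = q(u) \ge 0$ is real and nonnegative, so writing $|q(u)|$ versus $q(u)$ is immaterial.
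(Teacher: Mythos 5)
Your proposal is correct and follows essentially the same route as the paper: Rosenthal applied to the centered sum, the moment bound $\E\big[|\hat\phi_j'(u)-\phi_j'(u)|^{2k}\big]\lesssim \E\big[|Z_j|^{2k}\big]$, Lemma~\ref{Lemma_Ableitungen} to turn the latter into $\max\{\Delta_j,\Delta_j^{2k}\}C^{\pm}_{2k}\le \DMax^{2k-1}\Delta_j C^{\pm}_{2k}$, and finally $|w_j|\le 1$ together with $\sum_j\Delta_j|w_j|^2=q(u)$ for the oracle weights to collapse both Rosenthal terms. The one small cosmetic slip is the pointwise claim $|\xi_j|\le 2|Z_j|$, which is false when $|Z_j|$ is small relative to the constant $|\E[Z_je^{iuZ_j}]|$; the correct statement is the moment bound $\E[|\xi_j|^{2k}]\le 2^{2k}\E[|Z_j|^{2k}]$ obtained via $|\xi_j|\le |Z_j|+\E|Z_j|$ and convexity, which you do invoke — so no real gap.
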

\begin{proof} Another application of the Rosenthal inequality,  along with the fact that $|w_j|\leq 1$ and Lemma \ref{Lemma_Ableitungen} gives,  for  constants $C_k, \, k=1, ...,3$ depending only on $m$,  and some $C$ depending on $m$, $\DMax$ and $C_{2m}^{\pm}$, 
\begin{align}
& \Ebig{  \big|\hat{p}(u) - p(u) \big|^{2m}  }  \\
\leq  & C_1 \max\Big\{   \Big( \sum_{j=1}^{n}|w_j(u)|^2  \E{|\hat{\phi}_j'(u)-\phi_j'(u)|^2} \Big)^m,
\sum_{j=1}^{n} |w_j(u)|^{2m} \E{  |\hat{\phi}_j'(u)-\phi_j'(u)|^{2m} } \Big\}  \\
\leq & C_2  \max\Big\{   \Big( \sum_{j=1}^{n}|w_j(u)|^2  \E{|Z_j|^2} \Big)^m,
\sum_{j=1}^{n} |w_j(u)|^{2m} \E{  |Z_j|^{2m} } \Big\}  \\
\leq & C_3 C_{2m}^{\pm} \max\Big\{   \Big( \sum_{j=1}^{n}|w_j(u)|^2  \max\{
\Delta_j,\Delta_j^2 \}   \Big)^m, \sum_{j=1}^{n}|w_j(u)|^{2m} \max\{\Delta_j , \Delta_j^{2m} \} \Big\}  \\
\leq & C_4 C_{2m}^{\pm} \DMax^{2m-1} \max\Big\{ \Big( \sum_{j=1}^{n} \Delta_j |w_j(u)|^2 \Big)^m 
, \sum_{j=1}^{n}  \Delta_j |w_j(u)|^{2}  \Big\}  =  C\max \big\{|q(u)|^m, |q(u)|  \big\} .
\end{align}
This is the desired result 
\end{proof}
\subsection{Estimation of the jump dynamics}
\subsubsection{Non-asymptotic upper bound}
\textbf{ Proof of Theorem \ref{Hauptsatz_Risiko}  } 
We can estimate 
\begin{equation}
\E{\| g- \hat{g}_{h,n}\|_{{\lk^2}}^2 } \leq 2 \| g- \kf_h\ast g \|_{{\lk^2(\Omega)}}
+2\E{ \|\kf_h\ast g - \hat{g}_{h,n}\|_{{\lk^2}}^2}.
\end{equation}
The Plancherel formula gives 
\begin{eqnarray}
 \E{ \|\kf_h\ast g - \hat{g}_{h,n}\|_{{\lk^2}}^2 }
= \frac{1}{2\pi } \int  |\Fourier \kf_h(u)|^2\E{ \Big|{\frac{ \hat{p}(u)   }{\tilde{q}(u)   }   - \frac{ p(u)   }{q(u)   }              }\Big|^2} \d u. 
\end{eqnarray}
Now, we have the inequality 
\begin{eqnarray}
\Big|{\frac{ \hat{p}(u)   }{\tilde{q}(u)   }   - \frac{ p(u)   }{q(u)   }              }\Big|^2
\leq 2\left(\frac{ | \hat{p}(u) -p(u)     |^2    }{ |\tilde{q}(u)|^2  }   +|p(u)|^2 
\Big|\frac{1}{q(u)}- \frac{1}{\tilde{q}(u)}             \Big|^2\right).  \label{HS_1}
\end{eqnarray}
Consider the first summand on the right hand side of  formula \eqref{HS_1}.  We start by considering  $|q(u)|<1$.  Then, using the fact that $|\tilde{q}|$ is bounded from below by $\kappa$ as well as Lemma \ref{Lemma_p}, we find that 
\begin{align}
\ebigg{ \frac{ | \hat{p}(u) -p(u)     |^2    }{ |\tilde{q}(u)|^2  }             }
\leq   \frac{ \E{ | \hat{p}(u) -p(u)     |^2  }    }{\kappa^2 |q(u)|^2  }
\leq C \frac{1}{|q(u)| },
\end{align}
with some $C$ depending on $\Dmax, C^{\pm}_2$ and $\kappa$. 

Next, consider the case where $|q(u)|>1$. With the Cauchy-Schwarz inequality, Lemma   \ref{Lemma_p}  and Lemma \ref{Lemma_Neumann}, for some $C$ depending on $\DMax$, $C_{4}^{\pm}$ and $\kappa$, 
\begin{eqnarray}
\E{\frac{ | \hat{p}_n(u) -p(u)     |^2    }{ |\tilde{q}_n(u)|^2  }    }
\leq \mathbb{E}^{\frac{1}{2}}\left[| \hat{p}_n(u) -p(u)     |^4   \right]
\mathbb{E}^{\frac{1}{2}} \left[\frac{1}{|\tilde{q}_n(u)|^4}\right]\leq C   \frac{1}{|q(u)|}.
\end{eqnarray}
Thanks to  Lemma   \ref{Lemma_p}  and Lemma \ref{Lemma_Neumann}, for some $C$ depending on $\DMax$, $C_{4}^{\pm}$ and $\kappa$, 
\begin{eqnarray}
\mathbb{E}^{\frac{1}{2}}\left[| \hat{p}(u) -p(u)     |^4   \right]
\mathbb{E}^{\frac{1}{2}} \left[\frac{1}{|\tilde{q}(u)|^4}\right]
\leq C   \frac{1}{|q(u)|}.
\end{eqnarray}
Consider now the second summand on the right hand side of formula \eqref{HS_1}. An application of Lemma \ref{Lemma_Neumann}, along with the fact that 
\begin{align}
|\Psi'(u)|= \bigg|  \int x e^{iux}  \nu(\d x) \bigg| \leq \int |x|\nu(\d x)= \E[(X_+)_1]+\E[(X_-)_1] =   C_{1}^{\pm}, 
\end{align}
 yields  for some $C$ depending on $\DMax$ and $\kappa$, 
\begin{eqnarray}
|p(u)|^2 
\E{\Big| \frac{1}{q(u)}- \frac{1}{\tilde{q}(u)}              \Big|^2}
\leq C \frac{ |p(u)|^2 }{|q(u)|^{3} }   =  C \frac{|\Psi'(u)|^2}{|q(u)| }
\leq C (C^{\pm}_1)^2 \frac{1}{|q(u)|}.
\end{eqnarray}
This completes the proof.   \hfill $\Box$
\subsubsection{Rate results: Upper bounds }

\textbf{Proof of Lemma \ref{Lemma_Bias}.}We can estimate 
\renewcommand{\norm}[1]{\left\| #1\right\|_{\lk^2(\R)}^2 }
\begin{eqnarray}
\norm{ g- \kf_h\ast g}\leq 2\norm{\tilde{g} - \kf_h \ast \tilde{g} }  +
2\norm{\kf_h\ast \left( \tilde{g} -  g\right)  }.
\end{eqnarray}
A standard Taylor series argument implies for some constant $C$ depending on the choice of $\kf$, on $L$, $R$ and  $\omega_2- \omega_1$, 
\[
\norm{\tilde{g} - \kf_h \ast \tilde{g} }  \leq C  h^{2a}. 
\]

It remains to consider the second summand. Let $\delta:= \min\{\omega_1 -  d_1, d_2 - \omega_2\}$.  Using the fact that $\tilde{g}- g$ vanishes on $D$, as well as \eqref{Abfall_Kern},  we have for arbitrary $x\in \Omega$: 
\begin{eqnarray}
&\phantom{\leq}& \abs{ \kf_h \ast \left( \tilde{g} -  g    \right) (x)  }=\abs{ \int  \frac{1}{h} \kf \left(\frac{ x-y}{h } \right)\left( \tilde{g} -  g    \right) (y) \d y }\\
& \leq & \sup_{|z|\geq \delta}  \frac{1}{h}\abs{ \kf\left(  \frac{z}{h} \right)}
\left( \left\|\tilde{g}\right\|_{\lk^1(\R) } + \left\|{g}\right\|_{\lk^1(\R) }  \right)
\leq \delta^{-2a-1} \left( \left\|\tilde{g}\right\|_{\lk^1(\R) } + \left\|{g}\right\|_{\lk^1(\R) }  \right) h^{2a}. 
\end{eqnarray}
This completes the proof. \hfill $\Box$

\subsubsection{Minimax lower bounds}
\textbf{Proof of theorem \ref{Satz_untere_Schranken}}
The minimax lower bounds are established by looking at a decision problem between an increasing finite number $M$ of alternatives, see Theorem 2.5 in Tsybakov\  (2003).  The construction of the alternatives essentially  follows the proof of Theorem 4.4 in Neumann and Rei\ss \ (2009). For this reason, we only sketch the most important  steps and omit some of the technical details. 

Let 
\[
\eta(x) =  b |x| e^{-\lambda |x| },  \  x\in \R 
\]
be the L\'evy density of a  symmetric bilateral Gamma-distribution with parameters 
$b=\beta/2$ and with $\lambda>0$ to be appropriately chosen.  
Let $g(x):=x \eta (x)$.   By $\PP_{0,j}$ we denote the infinitely divisible distribution with characteristic function  
\[
\phi_{0,j}(u) = \exp\Big(\Delta_j \int (e^{iux}-1) \eta(x) \d x \Big).
\]

Since $\eta$ is infinitely differentiable away from zero and $\eta(x)\sim |x|^{-1}$, $\eta \to 0$, one can, with the appropriate choice of $\lambda$,  always guarantee that $g$ or $g|_D$ is contained in the prescribed class of globally or locally regular functions. 

By   $f_{\mathcal{N}(\mu,\sigma)}$ we denote the density of a $\mathcal{N}_{\mu,\sigma}$-distribution.  For  $n\in \N$, a positive integer $m_n$  to be appropriately chosen  and some universal positive constant  $d>0$, we introduce  the following perturbations of $\eta$, 
\begin{align}
h_{n,j}(x):= d   K_n2\sin(  (m_n+j)  x)f_{ 
\mathcal{N}(0,1)}(x), \   j= 0, \cdots, m_n - 1 .
\end{align}  
When locally H\"older regular functions are considered, $K_n:= m_n^{-a-\frac{1}{2} }$. For functions $g$ having a Fourier transform which decays at the rate $|u|^{-1}$, $K_n:=m_n^{-1}$. 
For $S\subseteq \{0,\cdots, m_n -1\}$,  we define 
\begin{align}
\eta_{S,n}(x):= \eta(x)+\sum_{j\in S} h_{n,j}(x). 
\end{align}
Moreover, $g_{S,n}(x):= x \eta_{S,n}(x)$.  Using the fact that the multiplication with a sine- function corresponds to a shift in the Fourier domain, we find that 
\begin{align}
 & \Fourier(x  h_{n,j}(x) ) (u) \\
 =&  \frac{d K_n}{2}   \Big[
(u-(m_n+j) ) \exp\Big(- \frac{1}{2} (u - (m_n+j) )^2 \Big) -  (u + (m_n+j) )\exp\Big(-  \frac{1}{2} (u+m_n+j) ^2 \Big].
\end{align}
On the other hand, arguments presented in Neumann and Rei\ss \ (2009) ensure that the characteristic function of the perturbed distribution has the same decay behavior as  $\phi_{0,j}$.

From there, we derive that the perturbed functions $g_{S,n}$ are still in the prescribed classes of globally or locally regular functions.

Let us estimate, for $S_1,S_2 \subseteq \{0,\cdots, m_n -1 \}$, the $\lk^2$-distance between $g_{S_1,n}$ and $g_{S_2,n}$. We start by observing that, using elementary calculus, the decay of the normal density as well as the periodicity of the sine-function,  
\begin{align}
\int_\Omega |g_{S_1,n}(x) - g_{S_2,n}(x)|^2 \d x
 &= \int_\Omega  \Big|  \sum_{j\in S_1 \vartriangle S_2}  x h_{h,j}(x) \Big|^2 \d x\\
 &\geq  c \int \Big| \sum_{j\in S_1 \vartriangle S_2} x h_{h,j}(x) \Big|^2 \d x  = c \int |g_{S_1,n}(x) - g_{S_2,n}(x)|^2 \d x
 \end{align}
 for a small enough positive constant $c$. By the Plancherel formula and the construction of the $h_{n,j}$, 
 \begin{align}
 & \int |g_{S_1,n} (x) -g_{S,n}(x)|^2 \d x
= \frac{1}{2\pi} \int |\sum_{j\in S_1\vartriangle S_2}  \Fourier (x h_{n,j}(x)) (u)|^2 \d u   \\
\geq  &  \frac{1}{2\pi} \sum_{j\in S_1\vartriangle S_2}\int |\Fourier (x h_{n,j}(x)) (u)|^2 \d u
\geq C |S_1\vartriangle S_2 | K_n^2.
\end{align}  
For some universal positive constant $C$. 

Next, we calculate the Kullback-Leibler divergence between the competing measures. 
\begin{align}
\KL( \PP_{S,n} |  \PP_{0,n}  ) = \sum_{j=1}^{n}   \KL( \PP_{S,n,j}|\PP_{0,n,j} ) 
\leq \sum_{j=1}^{n}  \chi(\PP_{S,n,j}, \PP_{0,n,j} ) .
\end{align}
We may again argue  along the same lines as in the proof of Theorem 4.4 in Neumann and Rei\ss \  (2009)  to find that for some positive constant $C'$ depending on $d$,  
\[
\sum_{j=1}^{n}  \chi(\PP_{S,n,j}, \PP_{0,n,j} )   \leq C'|S| \sum_{j=1}^{n} \Delta_j   (1+m_n)^{-2\Delta_j \beta}      K_n^2. 
\]

For $m_n\geq 8$, the Varshamov-Gilbert Lemma (see chapter 2 in Tsybakov (2003)) guarantees the existence of $M\geq \sqrt[8]{2}^{m_n}$ different subsets $S_0, \cdots, S_{M-1}$ of $\{0,\cdots, m_n-1\}$, including the empty set  $S_0=\emptyset $, such that 
\[
|S_k \vartriangle S_\ell|   \geq m_n/8   \quad   \forall 0 \leq k< \ell \leq M-1. 
\]

Theorem  2.5 in Tsybakov (2003) then implies that, with $K_n$ implicitly defined by $K_n\sum_{j=1}^{n} \Delta_j (1+|m_n|)^{-2\beta}  =1$,
\begin{align} 
\limsup_{n\to \infty}\,  \inf_{\tilde{g}_n}\,   \sup_{\check{g} \in \mathcal{G}  } \, 
\mathbb{E}_{g} \big[  \|\check{g}- g \|_{\lk^2}^2  \big] K_n^{-2} m_n \geq 0. 
\end{align}
Here $\mathcal{G}$ may either stand for the class $\mathcal{G}_{\text{\tiny pol}  }(\beta, C_\phi,c_\phi,C_g,C)$ or $\mathcal{G}(a,D,L,R, C_1, \beta, C_2, C_3)$ and $\lk^2$ denotes the $\lk^2$ norm on the whole real axes or on $\Omega$, respectively. This completes the proof. 
\hfill  $\Box$

\subsection{Estimating the distributional density}
\subsubsection{Upper risk bound}
Before proving the main result of the section, we formulate an auxiliary result.  
The proof of the Lemma is postponed to the appendix.
\begin{lemma} \label{Hauptlemma} For any $m\in  \N$,  there exists a constant $C$ depending on $m$, $\Dmax$ and $C_{4m}^{\pm}$ such that 
\begin{align}
& \Ebig{ \Big|\intu \frac{\hat{p}(x)}{\tilde{q}(x) } -\frac{p(x)     }{ q(x) }  \d x \Big|^{2m} } 
\leq  C  \Big(C_{\Psi,1}  \intu \frac{1}{q(x) }  \d x \Big)^m \Big( \max\Big\{ 1, C_{\Psi,2}^2 \intu \frac{1}{q(x) }  \d x \Big\} \Big)^{m} ,  
\end{align}
with 
\[
C_{\Psi,1}= (\|\Psi'\|_{\lk^2,u}^2 +\|\Psi''\|_{\lk^1,u})\vee 1   \quad \text{and}  \quad  
C_{\Psi,2} = \|\Psi'\|_{\infty,u} \vee 1.
\]
\end{lemma}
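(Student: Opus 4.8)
The plan is to split the error $\int_0^u \frac{\hat p(x)}{\tilde q(x)}-\frac{p(x)}{q(x)}\,\d x$ into the two natural pieces dictated by the decomposition
\[
\frac{\hat p(x)}{\tilde q(x)}-\frac{p(x)}{q(x)}
= \frac{\hat p(x)-p(x)}{\tilde q(x)}
+ p(x)\Bigl(\frac{1}{\tilde q(x)}-\frac{1}{q(x)}\Bigr),
\]
so that, using $(a+b)^{2m}\le 2^{2m-1}(a^{2m}+b^{2m})$ and the triangle inequality in $\lk^2([0,u])$ (or rather in $\lk^1$ after Cauchy–Schwarz for the outer integral), it suffices to bound the $2m$-th moments of $\int_0^u \frac{|\hat p(x)-p(x)|}{|\tilde q(x)|}\,\d x$ and of $\int_0^u |p(x)|\,\bigl|\frac{1}{\tilde q(x)}-\frac{1}{q(x)}\bigr|\,\d x$ separately. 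For the outer $\lk^1$-norm over the compact interval $[0,u]$, I would apply Jensen's (or Hölder's) inequality, $\bigl(\int_0^u f(x)\,\d x\bigr)^{2m}\le u^{2m-1}\int_0^u f(x)^{2m}\,\d x$, to pull the power inside, and then use Fubini to interchange $\E$ and the integral. This reduces everything to pointwise moment bounds in $x$, which are exactly what Lemma \ref{Lemma_p} and Lemma \ref{Lemma_Neumann} provide.

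For the first piece, after moving the power inside I need $\E\bigl[|\hat p(x)-p(x)|^{2m}|\tilde q(x)|^{-2m}\bigr]$. I would split according to whether $|q(x)|<1$ or $|q(x)|\ge 1$, as in the proof of Theorem \ref{Hauptsatz_Risiko}: in the first regime $|\tilde q(x)|\ge\kappa$ gives a factor $\kappa^{-2m}$ and Lemma \ref{Lemma_p} yields a bound of order $|q(x)|\wedge |q(x)|^m$; in the second regime I apply Cauchy–Schwarz to separate $\E^{1/2}[|\hat p(x)-p(x)|^{4m}]$ and $\E^{1/2}[|\tilde q(x)|^{-4m}]$, bounding the first via Lemma \ref{Lemma_p} and the second via Lemma \ref{Lemma_Neumann} (with $3m$ replaced by $6m$). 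The upshot is a pointwise bound of the form $C\,\bigl(\|\Psi'\|_{\infty,x}^{2}\vee 1\bigr)^{m}\,|q(x)|^{-m}$ times harmless constants, where the $\|\Psi'\|_{\infty}$-type factor enters because $|p(x)|=|\Psi'(x)|\,|q(x)|$ and Lemma \ref{Lemma_p} is phrased in terms of $|w_j|=|\phi_j|$ rather than directly in terms of $p$. Integrating $|q(x)|^{-m}$ over $[0,u]$ and using Jensen once more gives $\bigl(\int_0^u |q(x)|^{-1}\,\d x\bigr)^m$, as desired; the $C_{\Psi,1}$ and $C_{\Psi,2}$ factors must be tracked carefully here, and their definition with the $\vee 1$ is precisely to absorb the case where these norms are small.

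For the second piece, $\int_0^u |p(x)|\,|1/\tilde q(x)-1/q(x)|\,\d x$, I would write $|p(x)|=|\Psi'(x)|\,|q(x)|$ and again, after Jensen and Fubini, reduce to $\E\bigl[|1/\tilde q(x)-1/q(x)|^{2m}\bigr]$, for which Lemma \ref{Lemma_Neumann} gives $\le C\,|q(x)|^{-2m}\wedge |q(x)|^{-3m}$. Pairing the $|q(x)|^{2m}$ from $|p(x)|^{2m}$ against $|q(x)|^{-3m}$ leaves $|q(x)|^{-m}$, and $|\Psi'(x)|^{2m}$ is controlled by $\|\Psi'\|_{\infty,u}^{2m}\le C_{\Psi,2}^{2m}$; integrating gives again a $\bigl(\int_0^u |q(x)|^{-1}\d x\bigr)^m$-type contribution, now accompanied by the $\max\{1,\,C_{\Psi,2}^2\int_0^u |q|^{-1}\}^m$ factor. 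I expect the main obstacle to be purely bookkeeping: correctly distributing the various powers of $|q(x)|$, keeping the Hölder exponents consistent when the outer $\lk^1$-norm is raised to the $2m$-th power, and verifying that the $C_{\Psi,1}$, $C_{\Psi,2}$ factors come out in exactly the asserted combination rather than some larger power — in particular ensuring that the "$\vee 1$" truncations in their definitions are genuinely what makes the two regimes $|q|<1$ and $|q|\ge 1$ glue together into the single clean bound stated in the Lemma.
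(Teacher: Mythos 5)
Your proposal diverges from the paper's proof in a way that creates a genuine gap, and I do not think it can be repaired along the lines you sketch.

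The central problem is the Jensen step $\bigl(\int_0^u f\,\d x\bigr)^{2m}\le u^{2m-1}\int_0^u f^{2m}\,\d x$. This introduces an extraneous factor $u^{2m-1}$ that simply does not appear in the asserted bound, which is written purely in terms of $\int_0^u q^{-1}\,\d x$. Since $q(x)\le q(0)=T$ for all $x$, one only has $\int_0^u q^{-1}\ge u/T$, so the factor $u^{2m-1}$ cannot be absorbed. Compounding this, your follow-up claim that "integrating $|q(x)|^{-m}$ over $[0,u]$ and using Jensen once more gives $\bigl(\int_0^u|q(x)|^{-1}\,\d x\bigr)^m$" is false: the power-mean inequality gives $\int_0^u q^{-m}\,\d x\ge u^{1-m}\bigl(\int_0^u q^{-1}\,\d x\bigr)^m$, i.e.\ the \emph{reverse} of what you need, and there is no way to upper-bound $\int_0^u q^{-m}$ by $\bigl(\int_0^u q^{-1}\bigr)^m$ without extra structure. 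A Minkowski-in-$L^{2m}$ variant applied directly to $\int_0^u(\hat p-p)/q$ fares a bit better but still leaves you with roughly $\bigl(\int_0^u q^{-1/2}\bigr)^{2m}\le u^m\bigl(\int_0^u q^{-1}\bigr)^m$, an extra $u^m$.

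The reason the paper's bound has no such loss is that it exploits the independence structure that Jensen discards. The paper uses the three-way decomposition $\hat p/\tilde q - p/q = (\hat p-p)/q + (\hat p-p)\mathrm{R} + p\,\mathrm{R}$, with $\mathrm{R}=1/\tilde q - 1/q$, so that the leading term has the \emph{deterministic} $q$ in the denominator. This allows $\int_0^u(\hat p-p)/q\,\d x$ to be written as a sum $\sum_j Y_j$ of independent, centered random variables $Y_j=\int_0^u w_j(\hat\phi_j'-\phi_j')/q\,\d x$, to which Rosenthal's inequality is applied directly — that is, to the integrated quantities, not pointwise in $x$. The $k$-th moment of each $Y_j$ is then evaluated by expanding $Y_j^k$ as a $k$-fold iterated integral over $[0,u]^k$, using the identity $\E\bigl[(iZ_j)^k e^{ivZ_j}\bigr]=\phi_j^{(k)}(v)$ to turn the expectation into a derivative of the characteristic function, and then performing $k-2$ integrations by parts; each such integration by parts trades one derivative of $\phi_j$ for one factor $\int_0^u q^{-1}$, and the two remaining derivatives produce the $C_{\Psi,1}$ factor. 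The two remainder pieces are handled by Minkowski's integral inequality, $\bigl\|\int_0^u p\,\mathrm{R}\,\d x\bigr\|_{L^{2m}}\le\int_0^u|p|\,\|\mathrm{R}(x)\|_{L^{2m}}\,\d x$, not Jensen, so again no spurious $u$-power appears. Your two-way decomposition with random $\tilde q$ in the denominator of the main term blocks the Rosenthal/characteristic-function route entirely, because $\hat p/\tilde q$ is no longer a sum of independent terms. So the tools you name (Lemma \ref{Lemma_p}, Lemma \ref{Lemma_Neumann}, Rosenthal) are the right ingredients, but applying Rosenthal pointwise in $x$ and then integrating with Jensen is the wrong order of operations; Rosenthal must be applied after the integral over $[0,u]$, at the level of the $Y_j$'s.
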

\textbf{Proof of Theorem \ref{Schranken_DS} } Parseval's identity, along with  the fact that $\kf$ is compactly supported  gives 
\begin{align}
\ebig{ \|f - \hat{f}_{h,n} \|_{\lk^2}^2  }  \leq 2 \| f- \kf_h \ast f \|_{\lk^2}^2 +\frac{1}{\pi}  \inth \ebig{ |\hat{\phi}(u) - \phi(u)|^2 }\d u
\end{align}
In the sequel, we use the short notation  notation
\begin{align}
\Delta(u) :=   \intu   \left( \frac{\hat{p}(z) }{\tilde{q}(z) }
-\frac{p(z)}{q(z) } \right)   \d z.
\end{align}
Since we have, by definition of $\hat{\phi}$,   $|\hat{\phi}(u) - \phi(u)|\leq |\check{\phi}(u) - \phi(u)|$, as well as  $|\hat{\phi}(u) - \phi(u)|\leq 2$, we can estimate 
\begin{align}
|\hat{\phi}(u) - \phi(u)|^2  \leq   |\check{\phi}(u) - \phi(u)|^2 \mathds{1}_{ \{ |\Delta(u)| \leq 1 \}}
+4 \mathds{1}_{ \{ |\Delta(u)| >1 \}}
\end{align}
Using the definition of $\check{\phi}$, as well as the fact that   $|1- \exp(z)| \leq 2 |z|$ holds for $|z|\leq 1$, we can continue by estimating for arbitrary  $m\in \N$: 
\begin{align}
& |\check{\phi}(u) - \phi(u)|^2 \mathds{1}_{ \{ |\Delta(u)| \leq 1 \}}
+4 \mathds{1}_{ \{ |\Delta(u)| >1 \}}  
\leq   |\phi(u)|^2  |1- \exp(\Delta(u))|^2 \mathds{1}_{\{|\Delta(u)| \leq 1\}  }    + 4  \mathds{1}_{ \{|\Delta(u)| >1 \}  }   \\
\leq &  4 |\phi(u)|^2 |\Delta(u)|^2   +4  |\Delta(u)|^{m }.
\end{align}
By Lemma \ref{Hauptlemma},  for some $C$ depending on $\DMax$ and on $\E{X_{4m}^{\pm} }$, 
\begin{align}
& \int_{-1/h}^{1/h}  | \phi(u) |^2\ebig{ |\Delta(u)|^2 \d u  }
       \leq   C C_{\Psi,1}  \inth |\phi(u)|^2  \intu \frac{1}{q(z) } \d z  \d u
\end{align}
and 
\begin{align}
& \int_{-1/h}^{1/h} \ebig{ |\Delta(u)|^m  } \d u \leq C  \inth \Big( C_{\Psi,1} \intu \frac{1}{q(x)}  \d x \Big)^m  \Big( \max\Big\{ 1, C_{\Psi,2}  \intu \frac{1}{q(z)} \d z  \Big\}\Big)^m \d u.
\end{align}
This completes the proof.    \hfill   $\Box$

%%%%%%%%%%%%%%%%%%%%%%%%%%%%%%%%%%%%%%%%%%%%

%%%%%   Beweis der oberen Schranke für die Dichteschätzung 

%%%%%%%%%%%%%%%%%%%%%%%%%%%%%%%%%%%%%%%%%%%%%%

\section*{Appendix}
 \textbf{Proof of Lemma \ref{Hauptlemma}}
We use the estimate 
\begin{align}
& \Big|\intu \frac{\hat{p}(x)}{\tilde{q}(x) } -\frac{p(x)     }{ q(x) }  \d x \Big|^{2m}  \\
 \label{Zerlegung_Hauptlemma}\leq & 3^{2m-1}  \Big( \Big|\intu \frac{ \epx -p(x)     }{ q(x) }  \d x \Big|^{2m} \hspace*{-0.2cm}
+ \Big|\intu ( \epx -p(x)   ) \Rem (x)  \d x \Big|^{2m}
\hspace*{-0.2cm}+  \Big|  \intu p(x) \Rem(x)   \d x  \Big|^{2m}    \Big). 
\end{align}
We bound, successively,  the expected value of each of the three terms in the second line of formula \eqref{Zerlegung_Hauptlemma}. 

\    \\
- Consider the first summand: By the Rosenthal inequality, 
\begin{align} \label{Rosen}
 &\Ebig{ \Big|\intu \frac{ \epx -p(x)     }{ q(x) }  \d x \Big|^{2m} } \\
\leq &
\max \Big\{  \sum_{j=1}^{n}  \Ebig{  \Big| \lintu  \frac{ w_j(x) (\phi'_j- \hat{\phi}_j')(x) }{q(x) }  \d x\Big|^{2m}}, \Big(\sum_{j=1}^{n}  \Ebig{  \Big| \lintu  \frac{ w_j(x) (\phi'_j- \hat{\phi}_j')(x) }{q(x) }  \d x\Big|^2} \Big)^{m}\Big\} 
\end{align}
For any integer  $k\geq 2$,
\begin{align}
& 2^{\m k} \Ebig{  \Big| \lintu  \frac{ w_j(x) (\hat{\phi}_j'-\phi'_j )(x) }{q(x) }  \d x\Big|^{k}}
\leq  \Ebig{  \Big| \lintu  \frac{ w_j(x) \hat{\phi}_j'(x)}{q(x) }  \d x\Big|^{k}                              }    \\
= & \int_{[0,u]^k}\limits\prod_{\ell =1}^{k} \frac{  w_j\big( (\m 1)^{\ell +1}  x_\ell  \big)   }{  q\big( (\m 1)^{\ell +1}  x_\ell  \big) }  \ebig{  (i Z_j)^{k}  \exp\big( i \big( \sum_{\ell =1}^{k} (\m 1)^{\ell +1} x_\ell \big) Z_j     \big)     }  
\d x_1 \cdots \d x_k  \\
= & \int_{[0,u]^k}\limits\prod_{\ell =1}^{k} \frac{  w_j\big( (\m 1)^{\ell +1}  x_\ell  \big)   }{  q\big( (\m 1)^{\ell +1}  x_\ell  \big) }  \phi_j^{(k)} \big( \sum_{\ell =1}^{k} (\m 1)^{\ell +1} x_\ell \big)  
\d x_1 \cdots \d x_k.
\end{align}

Repeated applications of the integration by parts formula give for some constant  $C$ depending on $k$, 
\begin{align}
 &\int_{[0,u]^k}\limits\prod_{\ell =1}^{k} \frac{  w_j\big( (\m 1)^{\ell +1}  x_\ell  \big)   }{  q\big( (\m 1)^{\ell +1}  x_\ell  \big) }  \phi_j^{(k)} \big( \sum_{\ell =1}^{k} (\m 1)^{\ell +1} x_\ell \big)  
\d x_1 \cdots \d x_k \\
\leq & C \hspace*{-0.1cm} \sum_{m \leq k-2}  \Big(   \max \Big\{\frac{1}{q(u)}, \frac{1}{q(0) } \Big\}^m \sup_{t\in\R}\intu \intu   \frac{|\phi_j''  \big(t+x_1 \big) w_j(x_1) w_j(x_2)|}{|q(x_1)q(x_2)|}  \d x_1  \d x_2 \\
& \phantom{ C \hspace*{-0.1cm} \sum_{m \leq k-1}   }     \underset{{[0,u]^{k-m-2}}   }{\int} 
\prod_{\ell=2}^{k-m-2}
\Big| \Big( \frac{w_j}{q} \Big)'(x_\ell)  \Big|   \d   x_3  \cdots   \d  x_{k-m-1   }   \Big).
\end{align}
First, using $\phi_j''= \Delta_j \Psi'' \phi_j +  \Delta_j^2 (\Psi')^2  \phi_j$ and the Cauchy-Schwarz inequality, we obtain 
\begin{align}
&\intu \intu   \frac{|\phi_j''  \big(t+x_1 \big) w_j(x_1) w_j(x_2)|}{|q(x_1)q(x_2)|}  \d x_1  \d x_2 \leq ( \|\Psi'\|_{\lk^2}^2 +\|\Psi''\|_{\lk^1} )    \DMax \intu \frac{ \Delta_j |w_j(x)|^2}{q(x)^2 } \d x.
\end{align}
On the other hand,
\begin{align}
q'(x) = \sum_{j=1}^{n} \Delta_j  (w_j'(x) \phi_j(x) +w_j(x) \phi_j'(x) )   =2 \sum_{j=1}^{n} 
\Delta_j^2 \Psi'(x) |\phi_j(x)|^2 
\end{align}
and consequently, 
\begin{align}
\Big| \Big( \frac{w_j}{q} \Big)'(x) \Big|  \leq \frac{w_j'(x)}{q(x) } + 2\Dmax \frac{\Psi'(x) w_j(x)}{q(x)}
=(1+2\Dmax) \frac{\Psi'(x) w_j(x)}{q(x)}.
\end{align}
This implies 
\begin{align}
\underset{{[0,u]^{k-m-2}}   }{\int} 
\prod_{\ell=3}^{k-m-2}
\Big| \Big( \frac{w_j}{q} \Big)'(x_\ell)  \Big|   \d x_3  \cdots \d x_{k-m-2   }
\leq \Big( 3 \DMax \|\Psi'\|_{\infty} \intu \frac{1}{q(x)}\d x \Big)^{k-m-2} .
\end{align}
Finally,  using the fact that $q$ is bounded above by $T$ and $q(0)=T$,   we find that for any $u\geq1$, 
\begin{align}
\max \Big\{\frac{1}{q(u)}, \frac{1}{q(0) } \Big\}  \leq \frac{2}{q(0)}+ \Dmax \intu \frac{|\Psi'(x)|}{q(x) }  \d x
\leq \DMax (|\Psi'\|_{\infty} +2)   \intu \frac{1}{q(x) }  \d x.
\end{align}
Putting the above together, we have shown that for any integer $k\geq 2$, there exists a constant $C$ depending on  $\Dmax$ and $k$ such that 
\begin{align}
  & \Ebig{  \Big| \lintu  \frac{ w_j(x) \hat{\phi}_j'(x)}{q(x) }  \d x\Big|^{k}     }   \leq  C   \Big(   C_{\Psi,2}\intu \frac{1}{q(x) }  \d x \Big)^{k-2}  
C_{\Psi,1}\intu  \frac{\Delta_j |w_j(x)|^2 }{q(x)^2 }  \d x.
\end{align}
From there, we can conclude that 
\begin{align}
& \sum_{j=1}^{n} \Ebig{  \Big| \intu  \frac{ w_j(x) (\hat{\phi}_j'-\phi'_j )(x) }{q(x) }  \d x\Big|^{k}}
\leq   C  \Big( C_{\Psi,2}\intu \frac{1}{q(x) }  \d x \Big)^{k-2} C_{\Psi,1} 
\intu  \frac{  \sum_{j=1}^n \Delta_j  |w_j(x)|^2  }{q(x)^2 }  \d x \\
=  &  C C_{\Psi,1} C_{\Psi,2}^{k-2} \Big( \intu \frac{1}{q(x) }  \d x \Big)^{k-1}. 
\end{align}

Combining this with \eqref{Rosen}, we have shown that for any $m\in \N$, there exists a  constant $C$ depending on $m$ and $\Dmax$ such that 
\begin{align}
 &\Ebig{ \Big|\intu \frac{ \epx -p(x)     }{ q(x) }  \d x \Big|^{2m} } 
\leq   C    \Big( C_{\Psi,1}  \intu \frac{1}{q(x)}  \d x       \Big)^m
\Big( \max\Big\{1, C_{\Psi,2}^2 \intu \frac{1}{q(x)}  \d x  \Big\}    
\Big)^{m-1} .  
\end{align}

-Consider now the second summand in \eqref{Zerlegung_Hauptlemma}. We apply the H\"older inequality, Lemma \ref{Lemma_Neumann} and Lemma \ref{Lemma_p} to derive that 
for a constant $C$ depending on $m$, $\DMax$  and $C_{4m}^{\pm}$, 
\begin{align}
 & \Ebig{  \Big|\intu ( \hat{p}(x) - p(x)  ) \Rem(x)  \d x  \Big|^{2m}           }
\leq \Big( \intu \ebig{ | \hat{p}(x) - p(x) |^{4m}}^{\frac{1}{4m}}   \ebig{|\Rem(x) |^{4m} }^{\frac{1}{4m}}  \d x  \Big)^{2m}\\
 \leq &  C \Big(  \intu   \frac{1}{q(x)} \d x \Big)^{2m}. 
\end{align}
-Finally, the third summand can be bounded as follows:  By the definition of $p$, the Cauchy-Schwarz inequality and  Lemma \eqref{Lemma_Neumann}, 
\begin{align}
 & \Ebig{ \Big|   \intu  p(x)  \Rem(x)  \d x \Big|^{2m}  }  \leq 
\Big( \intu |p(x) | \Ebig{|\Rem(x)|^{2m} }^{\frac{1}{2m} }  \d x \Big)^{2m} 
\leq C \Big( \intu \frac{|\Psi'(x)q(x) | }{q(x)^{\frac{3}{2}}  } \d x \Big)^{2m}   \\
\leq &C  \|\Psi'\|_{\lk^2}^{2m} \Big( \intu \frac{1}{q(x) }\d x \Big)^m. 
\end{align}

This completes the proof.   \hfill $\Box$

\subsection*{Acknowledgement.}  The author is grateful to Alexander Meister for intense discussions on this paper and for useful hints and ideas both, on the theoretical part and on the numerical procedure. 

\bibliographystyle{plainnat}
\bibliography{literatur}
\end{document}